\newtheorem{theorem}{Theorem}[section]
\newtheorem{corollary}[theorem]{Corollary}
\newtheorem{lemma}[theorem]{Lemma}
\theoremstyle{definition}
\numberwithin{equation}{section}
\begin{document}


\baselineskip=17pt



\title{{\bfseries One sided $a-$idempotent, one sided $a-$equivalent and $SEP$ elements in a ring with involution}}




\author{
   {{\bfseries \small Hua Yao$^{a,}$\footnote
  {Corresponding author. dalarston@126.com; jcweiyz@126.com}, Junchao Wei$^{b}$}}\\
   {\small $^a$ School of Mathematics and Statistics,
   Huanghuai University,}\\{\small
Zhumadian, Henan 463000, P. R. China}
   \\{\small $^b$ School of Mathematical Sciences,
   Yangzhou University,}\\{\small
Yangzhou, Jiangsu 225002, P. R. China}
}


\date{}

\maketitle

\renewcommand{\thefootnote}{}
\footnote{}
\renewcommand{\thefootnote}{\arabic{footnote}}
\setcounter{footnote}{0}

\begin{abstract}
In order to study the properties of $SEP$ elements, we propose the concepts of one sided $a-$idempotent and one sided $a-$equivalent. Under the condition that an element in a ring is both group invertible and $MP-$invertible, some equivalent conditions of such an element to be an $SEP$ element are given based on these two concepts, as will as based on projections and the second and the third power of some products of some elements.
\end{abstract}
{\bf 2020 Mathematics Subject Classification:} 15A09; 16U99; 16W10 \\
{\bf Keywords:} $SEP$ elements, $a-$idempotent, $a-$equivalent, projection.

\section{Introduction}
\label{Introduction}

An \emph{involution} $a\mapsto a^*$ in a ring $R$ is an anti-isomorphism of degree $2$, that is,
\begin{center}
$(a^*)^*=a$, ~$(a+b)^*=a^*+b^*$, ~$(ab)^*=b^*a^*$.
\end{center}
A ring $R$ with an involution $*$ is called a \emph{$*-$ring}.

Throughout this paper, unless otherwise stated, ring $R$ considered is a $*$-ring which is also associative with an identity.

An element $a \in R$ is said to be \emph{Moore$-$Penrose invertible} (\emph{MP$-$invertible} for short) \cite {Penr} if there exists some $b\in R$ such that the following Penrose
equations hold:
\begin{center}
(1) $aba=a$, ~(2) $bab=b$, ~(3) $ab=(ab)^*$, ~(4) $ba=(ba)^*$.
\end{center}
There is at most one $b$ such that the above conditions hold. We call it the \emph{Moore$-$Penrose inverse} (\emph{MP$-$inverse} for short) of $a$ and denote it by $a^{\dag}$.
The set of all MP$-$invertible elements of $R$ is denoted by $R^{\dag}$.

Following \cite {IsGr}, an element $a\in R$ (unnecessarily a $*$-ring) is said to be \emph{group invertible} if there is some $b\in R$ satisfying the following conditions:
\begin{center}
$aba=a$, ~$bab=b$, ~$ab=ba$.
\end{center}
There is at most one $b$ such that the above conditions hold. We call it the \emph{group inverse} of $a$ and denote it by $a^{\#}$.
The set of all group invertible elements of $R$ is denoted by $R^{\#}$.

Let $a\in R^\#\cap R^{\dag}$. If $a^\#=a^{\dag}$, then $a$ is called an $EP-$element of $R$. The set of all $EP-$ elements of $R$ is denoted by $R^{EP}$. There are lots of interesting properties about $EP-$elements,
which can be seen in some previous literatures \cite{Chen,DND,MD2011,MD20122,MDK,
ZhaoYaoWei2018,
ZhaoYaoWei2020}.

An element $a$ is called a partial isometry if $a\in R^\dag$
and $a^\ast = a^\dag$. The set of all partial isometries in a ring $R$ is denoted by $R^{PI}$.

An element $e\in R$ is called a projection if $e^2=e=e^*$. Denote the set of all projections of a ring $R$ by $PE(R)$. Clearly, $e\in R$ is a projection if and only if $e=ee^*$ if and only if $e=e^*e$.

If a partial isometry $a\in R$ is also an $EP$ element as well, that is $a^\ast = a^\dag=a^\#$, we call it a strongly $EP$ element, $SEP$ element for short. Denote by $R^{SEP}$ the set of all $SEP$ elements.

Recently, the properties of $SEP$ elements are investigated \cite{ZhangWangChenWei,ZhaoWei,ZhaoYaoWei2020}. In this paper, some new properties of $SEP$ elements are studied further. In Section \ref{section2}, some equivalent conditions of an element $a\in R^{\#}\cap R^{\dag}$ to be an $SEP$ element are given based on some other elements being projections. In Section \ref{section3}, modeling on the concept of idempotent, we introduce the concept of one sided $a-$idempotent, including left $a-$idempotent and right $a-$idempotent. Then some equivalent conditions of an element $a\in R^{\#}\cap R^{\dag}$ to be an $SEP$ element are given based on these concepts.
Equivalent conditions based on the second and the third power of some products of some elements for an element to be an $SEP$ element are provided in \ref{section4}. Finally, in Section \ref{section5}, we propose the concept of one sided $a-$equivalent, including left $a-$equivalent and right $a-$equivalent. On this basis, we give some equivalent conditions for an element $a\in R^{\#}\cap R^{\dag}$ to be an $SEP$ element.



\section{Equivalent conditions based on projections for an element to be an $SEP$ element}
\label{section2}

In \cite [Theorem 2.3] {ZhangWangChenWei}, it is indicated that giving $a\in R^{\#}\cap R^{\dag}$, then $a\in R^{SEP}$ if and only if
$a(a^{\#})^*a^{\dag}=a^{\dag}a^2$. This makes us to give the following theorem.

\begin{theorem}
\label{theorem2.1}
Let $a\in R^{\#}\cap R^{\dag}$. Then $a\in R^{SEP}$ if and only if $a(a^{\#})^*a^{\dag}a^{\#}\in PE(R)$.
\end{theorem}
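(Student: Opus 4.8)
Write $x:=a(a^{\#})^{*}a^{\dag}a^{\#}$. Throughout I use the standard identities for $a\in R^{\#}\cap R^{\dag}$: $aa^{\#}a=a$, $a^{2}a^{\#}=a^{\#}a^{2}=a$, $a(a^{\#})^{2}=(a^{\#})^{2}a=a^{\#}$, $aa^{\#}=a^{\#}a$, $aa^{\dag}a=a$, $a^{\dag}aa^{\dag}=a^{\dag}$, together with the fact that $aa^{\dag}$ and $a^{\dag}a$ are projections. The plan is to treat the two implications separately; the forward one is a short substitution, while the backward one carries essentially all the weight.

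For the ``only if'' part, suppose $a\in R^{SEP}$, so that $a^{\#}=a^{\dag}=a^{*}$ and hence $(a^{\#})^{*}=a$. Substituting gives $x=a\cdot a\cdot a^{\dag}\cdot a^{\#}=a^{2}a^{\dag}a^{\#}$, and since $a$ is $EP$ the inverses $a^{\dag},a^{\#}$ commute with $a$; using $a^{2}a^{\#}=a$ this collapses to $x=aa^{\dag}\in PE(R)$.

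For the ``if'' part I assume $x=x^{2}=x^{*}$ and must recover $a^{\#}=a^{\dag}=a^{*}$; by \cite[Theorem 2.3]{ZhangWangChenWei} it in fact suffices to reach the relation $a(a^{\#})^{*}a^{\dag}=a^{\dag}a^{2}$. First I would record the one sided reductions that come for free from the shape of $x$: $(aa^{\dag})x=(aa^{\#})x=x=x(aa^{\#})$, and then, applying $*$ and using $x=x^{*}$, also $x(aa^{\dag})=(aa^{\#})^{*}x=x(aa^{\#})^{*}=x$. Next I would compute $xa^{2}=a(a^{\#})^{*}a^{\dag}a$ (using $a^{\#}a^{2}=a$), the right hand side being exactly $a(a^{\#})^{*}a^{\dag}$ followed by $a$, so that the target relation is within reach once a cancellation on the right by $a$ becomes licensed. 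The core of the argument is then to feed these identities, together with $x=x^{*}$ written out as $a(a^{\#})^{*}a^{\dag}a^{\#}=(a^{\#})^{*}(a^{\dag})^{*}a^{\#}a^{*}$ and the idempotency $x=x^{2}$, back on themselves, multiplying on either side only by elements of $\{a,a^{\#},a^{\dag},a^{*},aa^{\dag},a^{\dag}a\}$ that do not destroy information (recall $a$ is neither left nor right cancellable). Concretely I would first force $aa^{\#}=aa^{\dag}$, equivalently $a^{\#}=a^{\dag}$, so that $a$ becomes $EP$ and the four idempotents $aa^{\dag},a^{\dag}a,aa^{\#},a^{\#}a$ coalesce into a single projection; with that in hand $x=a(a^{\dag})^{*}(a^{\dag})^{2}$, and re-running the same manipulations on this reduced form forces $a^{\dag}=a^{*}$, completing the proof.

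The step I expect to be the real obstacle is isolating $aa^{\#}=aa^{\dag}$ from $x\in PE(R)$. The difficulty is the factor $(a^{\#})^{*}$ sitting in the interior of $x$: it does not absorb into any neighbouring factor, and $a^{\#}$ commutes with $a^{*}$ (equivalently $(a^{\#})^{*}$) only once $a$ is known to be normal, which is part of what is being proved. The way through is to exploit the two expressions $x=a(a^{\#})^{*}a^{\dag}a^{\#}$ and $x^{*}=(a^{\#})^{*}(a^{\dag})^{*}a^{\#}a^{*}$ simultaneously: the equality $x=x^{*}$ transports the awkward $(a^{\#})^{*}$ from an interior position to a terminal one, after which it can be removed against a projection. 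Making this bookkeeping precise, and choosing the multipliers so that no needed information is lost, is where the genuine work lies.
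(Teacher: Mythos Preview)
Your forward direction is fine. The backward direction, however, is only a plan, not a proof: you never exhibit the multipliers that extract $EP$ (let alone $PI$) from $x\in PE(R)$, and you say so yourself (``making this bookkeeping precise \ldots\ is where the genuine work lies''). That is exactly the step carrying all the content, so as written the converse is unproved. The paper fills this gap with a short but non-obvious move: write the projection identity in the form $x=x^{*}x$, i.e.
\[
a(a^{\#})^{*}a^{\dag}a^{\#}=(a^{\#})^{*}(a^{\dag})^{*}a^{\#}a^{*}\,a(a^{\#})^{*}a^{\dag}a^{\#},
\]
and multiply on the right by the single word $a^{2}a^{*}a^{\dag}a$. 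Using $a^{\#}a^{2}=a$, $a^{\dag}aa^{*}=a^{*}$ and $(aa^{\#})^{*}a^{\dag}=a^{\dag}$, the left side collapses to $a$, while the right side becomes $(a^{\#})^{*}(a^{\dag})^{*}a^{\#}a^{*}a$; since $(a^{\#})^{*}=a^{\dag}a(a^{\#})^{*}$ this yields $a=a^{\dag}a^{2}$, hence $a\in R^{EP}$. A second substitution then gives $a^{*}a^{\dag}=a^{\dag}a^{\dag}$ and $a\in R^{PI}$.

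Note also that your intended first target, $aa^{\#}=aa^{\dag}$, via $x=x^{*}$ and $x=x^{2}$ taken separately, has no visible path: the interior $(a^{\#})^{*}$ cannot be stripped by one-sided multiplication until some symmetry is already available. The paper's trick avoids this precisely by using $x=x^{*}x$, which moves $x^{*}$ to the left so that its trailing $a^{*}$ is in a position to be absorbed.
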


\begin{proof}
$\Rightarrow$ Since $a\in R^{SEP}$, $a(a^{\#})^*a^{\dag}=a^{\dag}a^2$ by \cite [Theorem 2.3] {ZhangWangChenWei}. This gives
$$a(a^{\#})^*a^{\dag}a^{\#}=a^{\dag}a\in PE(R).$$
$\Leftarrow$ From the assumption, one gets
$$a(a^{\#})^*a^{\dag}a^{\#}=(a(a^{\#})^*a^{\dag}a^{\#})^*(a(a^{\#})^*a^{\dag}a^{\#})=(a^{\#})^*(a^{\dag})^*a^{\#}a^*a(a^{\#})^*a^{\dag}a^{\#}.$$
Multiplying the equality on the right by $a^2a^*a^{\dag}a$, one has $a=(a^{\#})^*(a^{\dag})^*a^{\#}a^*a$.
Noting that $(a^{\#})^*=a^{\dag}a(a^{\#})^*$, one obtains $a=a^{\dag}a^2$ and so $a\in R^{EP}$.
It follows that
$$(a^{\dag})^*=aa^{\dag}(a^{\dag})^*=((a^{\#})^*(a^{\dag})^*a^{\#}a^*a)a^{\dag}(a^{\dag})^*=(a^{\#})^*(a^{\dag})^*a^{\#}$$
and
$$(a^{\dag})^*a=(a^{\#})^*(a^{\dag})^*a^{\#}a=(a^{\#})^*(a^{\dag})^*.$$
Applying the involution on the equality, one obtains
$$a^*a^{\dag}=a^{\dag}a^{\#}=a^{\dag}a^{\dag}.$$
By \cite [Corollary 2.10] {ZhaoWei}, $a\in R^{PI}$. Thus $a\in R^{SEP}$.
\end{proof}

\begin{theorem}
\label{theorem2.2}
Let $a\in R^{\#}\cap R^{\dag}$. Then $a\in R^{SEP}$ if and only if $a^{\dag}a(a^{\dag})^*a^{\dag}\in PE(R)$.
\end{theorem}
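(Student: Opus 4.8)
The argument will follow the template of Theorem~\ref{theorem2.1}: for the converse I plan to first establish $a\in R^{EP}$ and then upgrade to $a\in R^{PI}$.

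For the forward implication, if $a\in R^{SEP}$ then $a\in R^{EP}$, so $a^{\dag}a=aa^{\dag}$, and $a\in R^{PI}$, so $a^{\dag}=a^{*}$, whence $(a^{\dag})^{*}=a$. Substituting, $a^{\dag}a(a^{\dag})^{*}a^{\dag}=a^{\dag}a\cdot a\cdot a^{\dag}=aa^{\dag}\cdot a\cdot a^{\dag}=(aa^{\dag}a)a^{\dag}=aa^{\dag}\in PE(R)$.

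For the converse, put $p=a^{\dag}a(a^{\dag})^{*}a^{\dag}$ and assume $p\in PE(R)$. Applying the involution and using $a^{*}(a^{\dag})^{*}=(a^{\dag}a)^{*}=a^{\dag}a$ gives $p^{*}=(a^{\dag})^{*}a^{\dag}a^{*}(a^{\dag})^{*}=(a^{\dag})^{*}a^{\dag}a^{\dag}a$. From $p=p^{*}p$ I would, after multiplying on the right by a suitable word in $a$, $a^{*}$, $a^{\dag}$ (chosen so that the right-hand side telescopes by repeated use of $aa^{\dag}a=a$, $a^{\dag}aa^{*}=a^{*}$ and $(a^{\dag})^{*}a^{*}(a^{\dag})^{*}=(a^{\dag})^{*}$), reduce the equality to one of the form $a=(a^{\dag})^{*}W$ for a product $W$ of MP- and group inverses. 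Feeding in the identity $(a^{\dag})^{*}=aa^{\dag}(a^{\dag})^{*}$ then yields $a=aa^{\dag}((a^{\dag})^{*}W)=aa^{\dag}a=a^{2}a^{\dag}$, i.e. $a\in R^{EP}$, which is the same device by which $(a^{\#})^{*}=a^{\dag}a(a^{\#})^{*}$ is exploited in the proof of Theorem~\ref{theorem2.1}. Pinning down the correct multiplier so that every cancellation telescopes cleanly is the step I expect to be the main obstacle.

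Once $a\in R^{EP}$, one has $a^{\dag}a=aa^{\dag}$ and $a^{\dag}a^{\dag}a=a^{\dag}$, so $a^{\dag}a(a^{\dag})^{*}=(a^{\dag})^{*}$ and $p$ collapses to $p=(a^{\dag})^{*}a^{\dag}$; in particular $p=p^{*}$ is then automatic. The surviving requirement $p^{2}=p$ reads $(a^{\dag})^{*}a^{\dag}(a^{\dag})^{*}a^{\dag}=(a^{\dag})^{*}a^{\dag}$, and left multiplication by $a^{*}$, together with $a^{*}(a^{\dag})^{*}=a^{\dag}a$ and $a^{\dag}aa^{\dag}=a^{\dag}$, yields $a^{\dag}(a^{\dag})^{*}a^{\dag}=a^{\dag}$, whose adjoint is $(a^{\dag})^{*}a^{\dag}(a^{\dag})^{*}=(a^{\dag})^{*}$. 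Since $a^{\dag}(a^{\dag})^{*}$ and $(a^{\dag})^{*}a^{\dag}$ are automatically self-adjoint, $(a^{\dag})^{*}$ satisfies all four Penrose equations relative to $a^{\dag}$; by uniqueness of the MP-inverse and $(a^{\dag})^{\dag}=a$ we conclude $(a^{\dag})^{*}=a$, i.e. $a^{\dag}=a^{*}$, so $a\in R^{PI}$ and hence $a\in R^{SEP}$. (Alternatively, once $a\in R^{EP}$ one may instead derive $a^{*}a^{\dag}=a^{\dag}a^{\#}=a^{\dag}a^{\dag}$ and invoke \cite[Corollary~2.10]{ZhaoWei}, exactly as in the proof of Theorem~\ref{theorem2.1}.)
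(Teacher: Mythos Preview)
Your forward direction is fine, and the final paragraph (from $a\in R^{EP}$ to $a\in R^{PI}$ via $p=(a^{\dag})^{*}a^{\dag}$ and the Penrose equations for $a^{\dag}$) is correct. The gap is in the step where you try to extract $a\in R^{EP}$. Even granting that you can reach an equality of the form $a=(a^{\dag})^{*}W$, the substitution $(a^{\dag})^{*}=aa^{\dag}(a^{\dag})^{*}$ only returns $a=aa^{\dag}\cdot(a^{\dag})^{*}W=aa^{\dag}a=a$, which is the first Penrose equation and says nothing. Your displayed equality $aa^{\dag}a=a^{2}a^{\dag}$ is exactly the statement $a\in R^{EP}$ you are trying to prove, so this is circular. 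The analogy with Theorem~\ref{theorem2.1} fails precisely here: there the leading factor is $(a^{\#})^{*}$, and the identity $(a^{\#})^{*}=a^{\dag}a(a^{\#})^{*}$ inserts $a^{\dag}a$ on the \emph{left}, giving the nontrivial $a=a^{\dag}a\cdot a=a^{\dag}a^{2}$. With $(a^{\dag})^{*}$ the corresponding identity inserts $aa^{\dag}$, which is absorbed by $a$ on the left and yields nothing.

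The paper avoids this by reversing the order: from $p=pp^{*}$ it left-multiplies by $a^{*}aa^{\#}$ to obtain $a^{\dag}=a^{\dag}(a^{\dag})^{*}a^{\dag}a^{\dag}a$, then computes $a^{*}=a^{*}aa^{\dag}=a^{*}a\cdot a^{\dag}(a^{\dag})^{*}a^{\dag}a^{\dag}a=a^{\dag}a^{\dag}a$, so $a^{*}a^{\dag}=a^{\dag}a^{\dag}$ and $a\in R^{PI}$ by \cite[Corollary~2.10]{ZhaoWei}. Only after $(a^{\dag})^{*}=a$ is available does the relation collapse to $a^{\dag}=a^{\dag}a^{\dag}a$, giving $a\in R^{EP}$. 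So the workable route is $R^{PI}$ first, then $R^{EP}$, not the other way around; your alternative remark at the very end points in the right direction, but it needs to be the primary argument rather than an afterthought once $R^{EP}$ is already (illegitimately) in hand.
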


\begin{proof}
$\Rightarrow$ Suppose that $a\in R^{SEP}$. Then $a(a^{\#})^*a^{\dag}=a^{\dag}a^2$ by \cite [Theorem 2.3] {ZhangWangChenWei} and $a^{\dag}=a^{\#}$. It follows that
$$a^{\dag}a(a^{\dag})^*a^{\dag}=a^{\dag}aa^{\#}a(a^{\#})^*a^{\dag}=a^{\dag}aa^{\#}a^{\dag}a^2=a^{\dag}a\in PE(R).$$

$\Leftarrow$ From the hypothesis, one yields
$$a^{\dag}a(a^{\dag})^*a^{\dag}=a^{\dag}a(a^{\dag})^*a^{\dag}(a^{\dag}a(a^{\dag})^*a^{\dag})^*
=a^{\dag}a(a^{\dag})^*a^{\dag}(a^{\dag})^*a^{\dag}a^{\dag}a.$$
Multiplying the equality on the left by $a^*aa^{\#}$, one gets $a^{\dag}=a^{\dag}(a^{\dag})^*a^{\dag}a^{\dag}a$
and
$$a^*=a^*aa^{\dag}=a^*aa^{\dag}(a^{\dag})^*a^{\dag}a^{\dag}a=a^{\dag}a^{\dag}a.$$
So $a^*a^{\dag}=a^{\dag}a^{\dag}$. By \cite [Corollary 2.10] {ZhaoWei}, $a\in R^{PI}$, and this infers $(a^{\dag})^*=a$.
Now
$$a^{\dag}=a^{\dag}(a^{\dag})^*a^{\dag}a^{\dag}a=a^{\dag}aa^{\dag}a^{\dag}a=a^{\dag}a^{\dag}a.$$
Hence $a\in R^{EP}$ and so $a\in R^{SEP}$.
\end{proof}

From \cite [Theorem 1.5.1] {Mosic}, $a\in R^+$ is a partial isometry if and only if $aa^*\in PE(R)$. This implies the following theorem.

\begin{theorem}
\label{theorem2.3}
Let $a\in R^{\#}\cap R^{\dag}$. Then $a\in R^{SEP}$ if and only if $aa^*a^{\dag}a^{\dag}a^2\in PE(R)$.
\end{theorem}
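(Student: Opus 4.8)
The plan is to mimic the structure of the proofs of Theorems 2.1 and 2.2: establish the forward direction by a short computation using the known characterization $a(a^{\#})^*a^{\dag}=a^{\dag}a^2$ together with $a^{\dag}=a^{\#}=a^*$, and then prove the converse by exploiting the idempotent-and-self-adjoint condition to force first $a\in R^{PI}$ (via \cite[Corollary 2.10]{ZhaoWei}) and then $a\in R^{EP}$, whence $a\in R^{SEP}$. For $(\Rightarrow)$: if $a\in R^{SEP}$ then $a$ is a partial isometry, so by \cite[Theorem 1.5.1]{Mosic} $aa^*\in PE(R)$; moreover $a^\dag a=aa^\dag$ is a projection and $a^{\dag}a^{\dag}a^2=a^{\dag}a$ (using $a\in R^{EP}$). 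Then $aa^*a^{\dag}a^{\dag}a^2=aa^*a^{\dag}a=aa^*\cdot a^{\dag}a$; since $aa^*$ and $a^{\dag}a$ are commuting projections (both equal expressions living in the corner determined by $a$), their product is again a projection, giving $aa^*a^{\dag}a^{\dag}a^2\in PE(R)$. I would double-check the commutativity $aa^*\cdot a^\dag a = a^\dag a\cdot aa^*$ directly from $a^*a^\dag=a^\dag$ and $aa^\dag=a^\dag a$, which hold for an $SEP$ element.

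For $(\Leftarrow)$: write $p=aa^*a^{\dag}a^{\dag}a^2$ and use $p=p^*p$ (the projection identity in the form $e=e^*e$). Expanding, $p^*=(a^{\dag}a^2)^*(a^{\dag})^*a^*\cdot a a^*=a^*a^*(a^{\dag})^*(a^{\dag})^*a a^* $ — I would be careful with the transpose here, but the point is to get an identity of the shape $aa^*a^{\dag}a^{\dag}a^2 = (\text{stuff})\,aa^*a^{\dag}a^{\dag}a^2$. Then, just as in Theorems 2.1 and 2.2, I would multiply on an appropriate side by a suitable product (something like $a^*(a^\dag)^*\cdots$ or $\cdots a^{\#}a^\dag(a^{\dag})^*$) chosen so that the known Penrose/group identities collapse the long words, isolating a clean relation such as $a^*a^\dag=a^\dag a^\dag$ (or its adjoint $(a^\dag)^*a=a a$, i.e.\ $(a^\dag)^*=a$ after multiplying by $a^\dag a$). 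Once $a^*a^\dag=a^\dag a^\dag$ is in hand, \cite[Corollary 2.10]{ZhaoWei} yields $a\in R^{PI}$, so $(a^\dag)^*=a$; feeding this back into the collapsed relation (as in Theorem 2.2, where $a^\dag=a^\dag a^\dag a$ emerged) should give $a\in R^{EP}$, and therefore $a\in R^{SEP}$.

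The main obstacle I anticipate is purely computational: choosing the correct one-sided multiplier in the converse so that the six- or seven-letter word $aa^*a^{\dag}a^{\dag}a^2$ and its reflection telescope down to the two-letter relation $a^*a^\dag=a^\dag a^\dag$. The relevant simplifications are $aa^\dag a=a$, $a^\dag aa^\dag=a^\dag$, $a^\# a a^\#=a^\#$, $aa^\#=a^\# a$, $a^\# a^2=a$, $a(a^\#)^2=a^\#$, and the symmetry of $aa^\dag$, $a^\dag a$; I would look for a multiplier on the right that kills the trailing $a^2$ down to $a^\dag a$ or $1$ and simultaneously, after using $p=p^*p$, exposes a factor of $a^*$ that cannot be absorbed unless $a^*a^\dag=a^\dag a^\dag$. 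A reasonable first attempt is to right-multiply by $a^*(a^{\dag})^*(a^{\dag})^*$ or by $(a^2)^\#$-type expressions built from $a^\#$; a parallel left-multiplication cleanup (as done in Theorem 2.2 with the factor $a^*aa^\#$) may be needed as well. Everything after obtaining $a^*a^\dag=a^\dag a^\dag$ is routine and follows the template already established above.
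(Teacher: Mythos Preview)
Your forward direction works but is unnecessarily elaborate: once $a\in R^{SEP}$ you have $a^*=a^{\dag}=a^{\#}$, so $aa^*a^{\dag}a^{\dag}a^2=aa^{\#}a^{\#}a^{\#}a^2=aa^{\#}=aa^{\dag}\in PE(R)$ in a single line, with no need to argue about commuting projections.

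The converse, however, is only a plan, and the crucial step is explicitly left undone: you propose to use $p=p^*p$ and then ``look for a multiplier'' that collapses the resulting word to $a^*a^{\dag}=a^{\dag}a^{\dag}$, but you neither identify that multiplier nor verify that any of your candidate guesses work. That is a genuine gap. In fact the paper's argument proceeds quite differently. It uses the \emph{idempotent} identity $p=p^2$ (not $p=p^*p$): from
\[
aa^*a^{\dag}a^{\dag}a^2=(aa^*a^{\dag}a^{\dag}a^2)^2=aa^*a^{\dag}a^{\dag}a^3a^*a^{\dag}a^{\dag}a^2,
\]
left-multiplication by $(a^{\#})^*a^{\dag}$ strips off the leading $aa^*$ (via $(a^{\#})^*a^{\dag}aa^*=(aa^{\#})^*$ and $(aa^{\#})^*a^{\dag}=a^{\dag}$), giving $a^{\dag}a^{\dag}a^2=a^{\dag}a^{\dag}a^3a^*a^{\dag}a^{\dag}a^2$. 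One then cancels an $a^{\dag}$ on the left by \cite[Lemma~2.11]{ZhaoWei} and left-multiplies by $aa^{\#}$ to obtain $a=a^2a^*a^{\dag}a^{\dag}a^2$, i.e.\ $a^{\#}a=aa^*a^{\dag}a^{\dag}a^2\in PE(R)$. This yields $a\in R^{EP}$ \emph{first} (opposite to your intended order), after which $aa^*=aa^*a^{\dag}a^{\dag}a^2\in PE(R)$ gives $a\in R^{PI}$ and hence $a\in R^{SEP}$. Note that the decisive left multiplier $(a^{\#})^*a^{\dag}$ is not among your suggested attempts, and the route through $a^{\#}a\in PE(R)$ bypasses the relation $a^*a^{\dag}=a^{\dag}a^{\dag}$ entirely.
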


\begin{proof}
$\Rightarrow$ Since $a\in R^{SEP}$, $a^*=a^{\dag}=a^{\#}$, it follows that
$$aa^*a^{\dag}a^{\dag}a^2=aa^{\#}a^{\#}a^{\#}a^2=aa^{\#}=aa^{\dag}\in PE(R).$$

$\Leftarrow$ From $aa^*a^{\dag}a^{\dag}a^2\in PE(R)$, one gets
$$aa^*a^{\dag}a^{\dag}a^2=(aa^*a^{\dag}a^{\dag}a^2)^2=aa^*a^{\dag}a^{\dag}a^3a^*a^{\dag}a^{\dag}a^2.$$
Multiplying the equality on the left by $(a^{\#})^*a^{\dag}$, one has
$$a^{\dag}a^{\dag}a^2=a^{\dag}a^{\dag}a^3a^*a^{\dag}a^{\dag}a^2.$$
It follows from \cite [Lemma 2.11] {ZhaoWei} that
$$a^{\dag}a^2=a^{\dag}a^3a^*a^{\dag}a^{\dag}a^2$$
and
$$a=aa^{\#}a^{\dag}a^2=aa^{\#}a^{\dag}a^3a^*a^{\dag}a^{\dag}a^2=a^2a^*a^{\dag}a^{\dag}a^2.$$
This infers
$$a^{\#}a=aa^*a^{\dag}a^{\dag}a^2\in PE(R).$$
Hence $a\in R^{EP}$ by \cite [Theorem 1.1.3] {Mosic}. It follows that
$$aa^*=aa^*a^{\dag}a^{\dag}a^2\in PE(R).$$
So $a\in R^{PI}$ and thus $a\in R^{SEP}$.
\end{proof}

Let $a\in R^{\#} \cap R^{\dag}$ and write $\chi_{a}=\{a,a^{\#},a^{\dagger},a^{*}, (a^{\dagger})^{*}, (a^{\#})^{*}\}$. We obtain the following corollary.

\begin{corollary}
\label{corollary2.4}
Let $a\in R^{\#}\cap R^{\dag}$. Then $a\in R^{SEP}$ if and only if $aa^*a^{\dag}xx^{\dag}a\in PE(R)$ for some $x\in \chi_{a}$.
\end{corollary}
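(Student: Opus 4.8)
The plan is to exploit the variational nature of the corollary: the ``if'' direction must work uniformly for \emph{some} choice of $x\in\chi_a$, so after picking a particular $x$ the product $xx^\dag$ simplifies, and the ``only if'' direction must be checked for \emph{every} $x\in\chi_a$. For the forward implication I would assume $a\in R^{SEP}$, so $a^*=a^{\dag}=a^{\#}$ and in particular $a$ is $EP$, hence $xx^\dag=aa^{\dag}=a^{\dag}a$ for \emph{every} $x\in\chi_a$ (all six elements in $\chi_a$ generate the same left/right ideals when $a$ is $EP$, so $xx^\dag$ is the common projection $aa^{\dag}$). Then $aa^*a^{\dag}xx^{\dag}a=aa^{\#}a^{\#}(aa^{\dag})a=aa^{\dag}\in PE(R)$, as in the proof of Theorem~\ref{theorem2.3}; this handles all $x$ at once.

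For the converse I would specialize to a convenient witness, the natural choice being $x=a$ (or $x=a^{\dag}$, for which $xx^\dag$ also equals $aa^{\dag}$ or $a^{\dag}a$), so that the hypothesis reads $aa^*a^{\dag}aa^{\dag}a=aa^*a^{\dag}a\in PE(R)$, or after noting $a^*a^{\dag}a=a^*$ it becomes simply $aa^*\in PE(R)$; but that alone gives only $a\in R^{PI}$ by \cite[Theorem 1.5.1]{Mosic}, not $SEP$, so $x=a$ is \emph{not} a legitimate witness and the corollary as stated must intend $xx^\dag$ to be read without such collapse — i.e.\ for a genuinely informative choice one must take $x\in\{a^{\#},a^*,(a^{\dag})^*,(a^{\#})^*\}$ where $xx^\dag$ need not be $aa^\dag$ a priori. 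The cleanest route is therefore to mimic Theorem~\ref{theorem2.3} verbatim: from $e:=aa^*a^{\dag}xx^{\dag}a=e^2$, expand $e^2$, multiply on the left by $(a^{\#})^*a^{\dag}$ to kill the leading $aa^*a^{\dag}$ and reduce to an identity of the form $a^{\dag}a^{\dag}a\cdot(xx^\dag a)=a^{\dag}a^{\dag}a\cdot(xx^\dag a)(a^*a^{\dag}xx^\dag a)$, then apply \cite[Lemma 2.11]{ZhaoWei} to cancel one $a^{\dag}$, multiply by $aa^\#$ on the left to recover $a$, and conclude first $a\in R^{EP}$ via $a^\#a\in PE(R)$ \cite[Theorem 1.1.3]{Mosic}, whence $xx^\dag=aa^\dag$ retroactively, and then $aa^*\in PE(R)$ gives $a\in R^{PI}$, so $a\in R^{SEP}$.

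The main obstacle I anticipate is bookkeeping the factor $xx^\dag$ before one knows $a$ is $EP$: for each of the six elements $x\in\chi_a$ one needs that the manipulations (the left multiplications by $(a^{\#})^*a^{\dag}$ and by $aa^\#$, and the cancellations) do not inadvertently use $xx^\dag=aa^\dag$. I would resolve this by observing that for every $x\in\chi_a$ one has $x^\dag x a = a$ is false in general but $aa^\#\cdot$ (leading terms) still produces $a$ because the troublesome $xx^\dag$ sits to the \emph{right}, sandwiched as $a^{\dag}(xx^\dag)a$, and the key lemmas of \cite{ZhaoWei} only require cancelling the outermost $a^{\dag}$ on the left; alternatively, and more safely for the write-up, I would simply verify the six cases by noting $x x^{\dag}$ is always a projection $p_x$ with $a p_x = a$ or $p_x a = a$ (since each $x$ shares a one-sided annihilator with $a$), reducing every case to the single computation already done in Theorem~\ref{theorem2.3}. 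I expect the final proof to be three or four lines once this uniformity is isolated.
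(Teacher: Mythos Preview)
Your converse argument has two problems. First, the logical structure: in the $\Leftarrow$ direction the hypothesis is existential, so a witness $x\in\chi_a$ is \emph{handed} to you and you must show that \emph{any} such witness forces $a\in R^{SEP}$; you cannot ``specialize to a convenient witness'' of your own choosing. Second, and more damaging, your test computation for $x=a$ is wrong: you assert $a^*a^{\dag}a=a^*$, but what actually holds is $a^{\dag}a\cdot a^*=a^*$ (apply $*$ to $a(a^{\dag}a)=a$ and use $(a^{\dag}a)^*=a^{\dag}a$); the identity $a^*\cdot a^{\dag}a=a^*$ is equivalent to $a^{\dag}a^2=a$, i.e.\ to $a\in R^{EP}$, which you do not yet know. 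Hence $aa^*a^{\dag}a$ does \emph{not} collapse to $aa^*$, the hypothesis for $x=a$ is not merely ``$a\in R^{PI}$'', and there is no defect in the corollary at that point.

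The paper's proof bypasses all of this by observing at the outset that for every $x\in\chi_a$ one has $xx^{\dag}\in\{aa^{\dag},\,a^{\dag}a\}$: specifically $xx^{\dag}=aa^{\dag}$ for $x\in\tau_a=\{a,a^{\#},(a^{\dag})^*\}$ and $xx^{\dag}=a^{\dag}a$ for $x\in\gamma_a=\{a^{\dag},a^*,(a^{\#})^*\}$. This reduces the converse to exactly two cases, $aa^*a^{\dag}a\in PE(R)$ and $aa^*a^{\dag}a^{\dag}a^2\in PE(R)$; the second is literally Theorem~\ref{theorem2.3}, and the first is dispatched by the same manipulations. Your final paragraph edges toward this (``$ap_x=a$ or $p_xa=a$''), but that dichotomy should be the opening move, not a fallback after the mistaken detour through $aa^*$.
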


\begin{proof}
$\Rightarrow$ Since $a\in R^{SEP}$, $aa^*a^{\dag}a^{\dag}a^2\in PE(R)$ by Theorem \ref{theorem2.3}. Choosing $x=a^{\dag}$, we are done.

$\Leftarrow$ If there exists $x_0\in \chi_a$ such that $aa^*a^{\dag}x_0x_0^{\dag}a\in PE(R)$, then

(1) if $x_0\in \tau_a=\{a, a^{\#}, (a^{\dag})^*\}$, then $x_0x_0^{\dag}=aa^{\dag}$, so
$$aa^*a^{\dag}a=aa^*a^{\dag}aa^{\dag}a=aa^*a^{\dag}x_0x_0^{\dag}a\in PE(R).$$
Similar to the proof of Theorem \ref{theorem2.3}, $a\in R^{SEP}$.

(2) if $x_0\in \gamma_a=\{a^{\dag}, a^*, (a^{\#})^*\}$, then $x_0x_0^{\dag}=a^{\dag}a$, so
$$aa^*a^{\dag}a^{\dag}a^2=aa^*a^{\dag}x_0x_0^{\dag}a\in PE(R).$$
By Theorem \ref{theorem2.3}, $a\in R^{SEP}$.
\end{proof}

\begin{theorem}
\label{theorem2.5}
Let $a\in R^{\#}\cap R^{\dag}$. Then $a\in R^{SEP}$ if and only if $a^{\dag}a^3a^*a^{\dag}\in PE(R)$.
\end{theorem}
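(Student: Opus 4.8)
The plan is to follow the pattern established in Theorems \ref{theorem2.1}--\ref{theorem2.3}, proving the easy direction first and then extracting from the projection hypothesis enough equations to conclude both $a\in R^{EP}$ and $a\in R^{PI}$. For the forward direction, assume $a\in R^{SEP}$, so $a^*=a^\dag=a^\#$; then $a^\dag a^3 a^* a^\dag = a^\# a^3 a^\# a^\# = a^\# a = aa^\dag$, which lies in $PE(R)$ since $aa^\dag$ is always a projection. This is a one-line computation.

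For the converse, set $p=a^\dag a^3 a^* a^\dag$ and use $p=p^2$ (and $p=p^*$) to generate relations. Writing $p^2=a^\dag a^3 a^* a^\dag a^\dag a^3 a^* a^\dag$ and comparing with $p$, I would left-multiply by a suitable element of $\chi_a$ — presumably $(a^\#)^* a^\dag$ or $a a^\#$ — to strip the leading $a^\dag a^3$ (or $a^\dag$) factor, much as the proof of Theorem \ref{theorem2.3} multiplies on the left by $(a^\#)^* a^\dag$ and then invokes \cite[Lemma 2.11]{ZhaoWei} to cancel further. The goal of this cancellation is to reach an identity of the form $a^\dag a^2 = a^\dag a^2 \cdot (\text{stuff}) \cdot a^\dag a^3 a^* a^\dag$ or, after multiplying up by $aa^\#$, an equation exhibiting $a$ itself as $a^2 a^* a^\dag a^\dag a \cdot(\cdots)$, from which $a^\# a$ (equivalently $aa^\dag$, once $EP$ is known) equals the given projection. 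Using $p=p^*$ together with these gives $a^\# a = p \in PE(R)$, hence $a\in R^{EP}$ by \cite[Theorem 1.1.3]{Mosic}. Once $a$ is $EP$, $a^\#=a^\dag$ and the relations collapse: $p = a^\dag a^3 a^* a^\dag = a^2 a^* a^\dag$ reduces nicely, and one should be able to read off $aa^* \in PE(R)$, whence $a\in R^{PI}$ by \cite[Theorem 1.5.1]{Mosic}, and therefore $a\in R^{SEP}$.

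The main obstacle I anticipate is the bookkeeping in the converse: choosing the correct left (or right) multiplier so that the idempotent identity $p=p^2$ collapses to a usable cancellation, and then correctly applying the cancellation lemmas \cite[Lemma 2.11]{ZhaoWei} / \cite[Corollary 2.10]{ZhaoWei} to pass from equations in $a^\dag a^\dag$-prefixed form to equations in $a^\dag$-prefixed form and finally to $a$ itself. The degree-three power $a^3$ inside $p$ makes the products longer than in Theorem \ref{theorem2.3}, so I expect to need one extra cancellation step, but the structural route — first $EP$ via $a^\# a\in PE(R)$, then $PI$ via $aa^*\in PE(R)$ — should go through exactly as in the preceding theorems.
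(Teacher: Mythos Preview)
Your forward direction is fine and matches the paper (the paper routes through \cite[Theorem~2.3]{ZhangWangChenWei} instead of substituting $a^{\#}=a^{\dag}=a^*$ directly, but the endpoint $p=aa^{\dag}$ is the same).

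For the converse, however, your plan diverges from the paper in a way that matters. You propose to imitate Theorem~\ref{theorem2.3} and start from the idempotent relation $p=p^{2}$, hoping to peel off a leading factor by a left multiplication and then invoke \cite[Lemma~2.11]{ZhaoWei}. The obstruction is that
\[
p^{2}=a^{\dag}a^{3}a^{*}\,a^{\dag}a^{\dag}\,a^{3}a^{*}a^{\dag}
\]
has $a^{\dag}a^{\dag}$ buried in the middle, not at the left end, and neither of your suggested multipliers $(a^{\#})^{*}a^{\dag}$ or $aa^{\#}$ touches that block; the cancellation lemma you cite only strips a \emph{leading} $a^{\dag}$. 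In Theorem~\ref{theorem2.3} the analogous middle block was $a^{2}\cdot a$, which collapses trivially; here it does not, so the Theorem~\ref{theorem2.3} template stalls.

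The paper instead opens with the self-adjoint relation $p=p^{*}$. Grouping $p=(a^{\dag}a)\,a^{2}a^{*}a^{\dag}$ and using $(a^{\dag}a)^{*}=a^{\dag}a$ gives
\[
p^{*}=(a^{\dag})^{*}aa^{*}a^{*}\,a^{\dag}a,
\]
which visibly ends in the projection $a^{\dag}a$; hence $p=p^{*}=p^{*}(a^{\dag}a)=p\,a^{\dag}a$ for free. Left-multiplying $p=p\,a^{\dag}a$ by $(a^{\#})^{*}a^{\dag}a^{\#}$ collapses the whole thing to $a^{\dag}=a^{\dag}a^{\dag}a$, i.e.\ $a\in R^{EP}$. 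Only \emph{after} $EP$ is in hand does the paper use idempotency: now $p=a^{2}a^{*}a^{\dag}$, and $p=p^{2}$ reads $a^{2}a^{*}a^{\dag}=a^{2}a^{*}aa^{*}a^{\dag}$; multiplying on the left by $a^{\dag}a^{\#}$ and on the right by $a$ yields $a^{*}=a^{*}aa^{*}$, hence $a\in R^{PI}$.

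Two concrete fixes to your outline, then: (i) use $p=p^{*}$ first---that is where the easy cancellation lives for this particular $p$---and aim for $a^{\dag}=a^{\dag}a^{\dag}a$ rather than $a^{\#}a=p$; (ii) your final line ``read off $aa^{*}\in PE(R)$'' is not immediate from $p=a^{2}a^{*}a^{\dag}$; you still need the idempotent relation at that stage to extract $a^{*}=a^{*}aa^{*}$.
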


\begin{proof}
$\Rightarrow$ Since $a\in R^{SEP}$, $a(a^{\#})^*a^{\dag}=a^{\dag}a^2$ by \cite [Theorem 2.3] {ZhangWangChenWei}. This infers
$$a^{\dag}a^3a^*a^{\dag}=a(a^{\#})^*a^{\dag}aa^*a^{\dag}=aa^{\dag}\in PE(R).$$

$\Leftarrow$ From $a^{\dag}a^3a^*a^{\dag}\in PE(R)$, one gets
$$a^{\dag}a^3a^*a^{\dag}=(a^{\dag}a^3a^*a^{\dag})^*=(a^{\dag})^*aa^*a^*a^{\dag}a
=((a^{\dag})^*aa^*a^*a^{\dag}a)a^{\dag}a=a^{\dag}a^3a^*a^{\dag}a^{\dag}a.$$
Multiplying the equality on the left by $(a^{\#})^*a^{\dag}a^{\#}$, one yields
$a^{\dag}=a^{\dag}a^{\dag}a$. Thus $a\in R^{PE}$, which leads to $a^2a^*a^{\dag}=a^{\dag}a^3a^*a^{\dag}\in PE(R)$.
Hence
$$a^2a^*a^{\dag}=(a^2a^*a^{\dag})^2=a^2a^*a^{\dag}a^2a^*a^{\dag}=a^2a^*aa^*a^{\dag}.$$
Multiplying the equality on the left by $a^{\dag}a^{\#}$ and on the right by $a$, one obtains
$a^*=a^*aa^*$.
This induces $a\in R^{PI}$. Thus $a\in R^{SEP}$.
\end{proof}

Note that $xx^+=\left\{\begin{aligned}aa^+,& \ x \in \tau_a, \\a^+a,& \ x\in \gamma_a.\end{aligned}\right.$ Then Theorem \ref{theorem2.5} induces the following corollary.

\begin{corollary}
\label{corollary2.6}
Let $a\in R^{\#}\cap R^{\dag}$. Then the followings are equivalent:

(1) $a\in R^{SEP}$;

(2) $xx^{\dag}a^2a^*a^{\dag}\in PE(R)$ for some $x\in \gamma_a$;

(3) $x^{\dag}xa^2a^*a^{\dag}\in PE(R)$ for some $x\in \tau_a$.
\end{corollary}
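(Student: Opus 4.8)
The plan is to deduce Corollary~\ref{corollary2.6} from Theorem~\ref{theorem2.5} by exploiting the observation recorded just before the statement, namely that $xx^{\dag}=aa^{\dag}$ when $x\in\tau_a=\{a,a^{\#},(a^{\dag})^*\}$ and $xx^{\dag}=a^{\dag}a$ when $x\in\gamma_a=\{a^{\dag},a^*,(a^{\#})^*\}$. This reduces each of the expressions appearing in (2) and (3) to something directly comparable to the element $a^{\dag}a^3a^*a^{\dag}$ of Theorem~\ref{theorem2.5}.

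For the implication $(1)\Rightarrow(2)$ and $(1)\Rightarrow(3)$, I would simply note that if $a\in R^{SEP}$ then in particular $a\in R^{EP}$, so $aa^{\dag}=a^{\dag}a$, and by Theorem~\ref{theorem2.5} $a^{\dag}a^3a^*a^{\dag}\in PE(R)$. Picking any $x\in\gamma_a$, we have $x^{\dag}x=\dots$ (here one uses the dual fact $x^{\dag}x=a^{\dag}a$ for $x\in\gamma_a$ and $x^{\dag}x=aa^{\dag}$ for $x\in\tau_a$), hence $xx^{\dag}a^2a^*a^{\dag}=a^{\dag}aa^2a^*a^{\dag}=a^{\dag}a^3a^*a^{\dag}\in PE(R)$, which is (2); the choice $x\in\tau_a$ with $x^{\dag}xa^2a^*a^{\dag}=aa^{\dag}a^2a^*a^{\dag}=a^{\dag}a^3a^*a^{\dag}$ gives (3). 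So the forward directions are immediate once one is careful about which of $xx^{\dag}$, $x^{\dag}x$ equals $aa^{\dag}$ versus $a^{\dag}a$ for each family.

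For the reverse implications $(2)\Rightarrow(1)$ and $(3)\Rightarrow(1)$, suppose $xx^{\dag}a^2a^*a^{\dag}\in PE(R)$ for some $x\in\gamma_a$. Then $xx^{\dag}=a^{\dag}a$, so the hypothesis reads $a^{\dag}aa^2a^*a^{\dag}=a^{\dag}a^3a^*a^{\dag}\in PE(R)$, which is exactly the hypothesis of the $\Leftarrow$ direction of Theorem~\ref{theorem2.5}; hence $a\in R^{SEP}$. Similarly, if $x^{\dag}xa^2a^*a^{\dag}\in PE(R)$ for some $x\in\tau_a$, then $x^{\dag}x=aa^{\dag}$ and the hypothesis becomes $aa^{\dag}a^2a^*a^{\dag}=a^{\dag}a^3a^*a^{\dag}$... wait---this last step needs $aa^{\dag}a^2=a^{\dag}a^3$, i.e. $aa^{\dag}a^2=a^{\dag}a\cdot a^2$, which is \emph{not} automatic. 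So the genuine content is that $aa^{\dag}a^2a^*a^{\dag}=a^2a^*a^{\dag}$ lands in $PE(R)$, and one should instead run the argument through the equivalent ``$a^2a^*a^{\dag}\in PE(R)$'' that already appears inside the proof of Theorem~\ref{theorem2.5}.

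The main obstacle, and the step to treat with care, is precisely this asymmetry: for $x\in\gamma_a$ one wants $xx^{\dag}$ (a left factor) to absorb correctly against $a^2a^*a^{\dag}$, while for $x\in\tau_a$ one wants $x^{\dag}x$ to do so, and verifying that each resulting projection identity is literally one of the intermediate identities established in the proof of Theorem~\ref{theorem2.5} (namely $a^{\dag}a^3a^*a^{\dag}\in PE(R)$, respectively $a^2a^*a^{\dag}\in PE(R)$ under $a\in R^{EP}$). Once the bookkeeping of which one-sided unit equals $aa^{\dag}$ and which equals $a^{\dag}a$ is pinned down for all six members of $\chi_a$, the corollary follows with no further computation; I would organize the write-up as a short case analysis mirroring that of Corollary~\ref{corollary2.4}.
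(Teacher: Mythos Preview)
Your overall plan---reduce conditions (2) and (3) to the hypothesis of Theorem~\ref{theorem2.5} by computing $xx^{\dag}$ and $x^{\dag}x$ for the six elements of $\chi_a$---is exactly the paper's intended argument. The treatment of (2) is correct and complete: $xx^{\dag}=a^{\dag}a$ for $x\in\gamma_a$, so $xx^{\dag}a^2a^*a^{\dag}=a^{\dag}a^3a^*a^{\dag}$ and Theorem~\ref{theorem2.5} applies.

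The difficulty you run into with (3) is self-inflicted: your ``dual fact'' is stated the wrong way round. For $x\in\tau_a=\{a,a^{\#},(a^{\dag})^*\}$ one has $x^{\dag}x=a^{\dag}a$, not $aa^{\dag}$. Indeed, $a^{\dag}a$ is obvious for $x=a$; for $x=(a^{\dag})^*$ one uses $(a^{\dag})^{\dag}=a$ and $(y^*)^{\dag}=(y^{\dag})^*$ to get $x^{\dag}x=a^*(a^{\dag})^*=(a^{\dag}a)^*=a^{\dag}a$; and for $x=a^{\#}$ one checks, using $(a^{\#})^{\dag}=a^{\dag}a^3a^{\dag}$, that $(a^{\#})^{\dag}a^{\#}=a^{\dag}a^3a^{\dag}a^{\#}=a^{\dag}a$. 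With the correct identity, condition (3) reads $x^{\dag}xa^2a^*a^{\dag}=a^{\dag}a^3a^*a^{\dag}\in PE(R)$, which is again literally the hypothesis of Theorem~\ref{theorem2.5}, and no further work is needed.

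Your proposed workaround---arguing instead via $a^2a^*a^{\dag}\in PE(R)$---does not follow from Theorem~\ref{theorem2.5} as stated: in that proof the projection property of $a^2a^*a^{\dag}$ is obtained only \emph{after} $a\in R^{EP}$ has already been established from the form $a^{\dag}a^3a^*a^{\dag}$, so you cannot enter the argument at that point without an independent proof that $a^2a^*a^{\dag}\in PE(R)$ forces $a\in R^{EP}$. Fixing the sign of the dual identity removes the need for any such detour.
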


\begin{corollary}
\label{corollary2.7}
Let $a\in R^{\#}\cap R^{\dag}$. Then $a\in R^{SEP}$ if and only if $a^2a^*a^{\#}\in PE(R)$.
\end{corollary}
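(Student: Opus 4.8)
The plan is to treat the two implications separately: the ``only if'' direction is a one-line computation, and the ``if'' direction carries all the weight. For ``$\Rightarrow$'', if $a\in R^{SEP}$ then $a^{*}=a^{\dag}=a^{\#}$, so, using the elementary identity $a^{2}a^{\#}=a$, we get $a^{2}a^{*}a^{\#}=a^{2}a^{\#}a^{\#}=aa^{\#}=aa^{\dag}\in PE(R)$.

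For ``$\Leftarrow$'', put $e=a^{2}a^{*}a^{\#}$ and use its two defining relations $e^{2}=e$ and $e^{*}=e$ in succession. I would first extract $a\in R^{PI}$ from $e^{2}=e$ alone: expanding $e^{2}$ and using the inner reduction $a^{\#}a^{2}=a$ gives $a^{2}a^{*}a^{\#}=a^{2}a^{*}aa^{*}a^{\#}$; left-multiplying by $a^{\dag}a^{\#}$ and then right-multiplying by $a^{2}$, while using repeatedly $a^{\#}a^{2}=a$ and $a^{\dag}aa^{*}=a^{*}$, should collapse this to $a^{*}aa^{*}a=a^{*}a$. Hence $a^{*}a$ is a self-adjoint idempotent, i.e.\ a projection, and left-multiplying $a^{*}aa^{*}a=a^{*}a$ by $(a^{\dag})^{*}$ (using $(a^{\dag})^{*}a^{*}=aa^{\dag}$) yields $aa^{*}a=a$, so $a\in R^{PI}$ and $a^{*}=a^{\dag}$.

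With $a\in R^{PI}$ in hand I would then invoke $e^{*}=e$. A short computation using $a^{\#}a=aa^{\#}$, $aa^{*}a=a$ and $a^{2}a^{\#}=a$ gives $ea=a$; applying the involution and $e^{*}=e$ turns this into $a^{*}e=a^{*}$. On the other hand, grouping $a^{*}e=a^{*}a^{2}a^{*}a^{\#}=(a^{*}a)(aa^{*}a^{\#})$ and using $a^{*}=a^{\dag}$ together with $aa^{\dag}a^{\#}=a^{\#}$ gives $a^{*}e=a^{*}aa^{\#}$. Comparing, $a^{\dag}aa^{\#}=a^{*}aa^{\#}=a^{*}=a^{\dag}$, and left-multiplying by $a$ (and using $aa^{\dag}a=a$) gives $aa^{\#}=aa^{\dag}$, i.e.\ $a\in R^{EP}$. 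Therefore $a\in R^{PI}\cap R^{EP}=R^{SEP}$. (Equivalently, once $a\in R^{EP}$ is established one can finish by observing that $a^{\dag}a^{3}a^{*}a^{\dag}=a^{2}a^{*}a^{\dag}=a^{2}a^{*}a^{\#}\in PE(R)$ and quoting Theorem \ref{theorem2.5}.)

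The delicate point — and where the argument could easily get stuck — is the bookkeeping of multipliers: if one naively transposes $e=a^{2}a^{*}a^{\#}$ one is left with terms involving $(a^{\#})^{*}$ that are awkward to simplify. The idea is to avoid them entirely, by pulling $R^{PI}$ out of $e^{2}=e$ first and then using $e^{*}=e$ only through the already-derived relation $ea=a$, rather than through a direct expansion of $e^{*}$.
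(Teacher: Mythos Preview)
Your argument is correct. Both directions check out: the ``$\Rightarrow$'' computation is fine, and in ``$\Leftarrow$'' the chain from $e^{2}=e$ to $a^{*}a=a^{*}aa^{*}a$ to $aa^{*}a=a$ (hence $a\in R^{PI}$), followed by $ea=a$, $a^{*}e=a^{*}$, and $a^{\dag}aa^{\#}=a^{\dag}$ (hence $a\in R^{EP}$), is all valid. The identities you invoke ($a^{\dag}aa^{*}=a^{*}$, $aa^{\dag}a^{\#}=a^{\#}$, $a^{\#}a^{2}=a$, etc.) are standard and used correctly.

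The route, however, is the reverse of the paper's. The paper starts from the self-adjointness $e=e^{*}$, rewrites $e^{*}=(a^{\#})^{*}aa^{*}a^{*}=(a^{\#})^{*}aa^{*}a^{*}aa^{\dag}$, and after left-multiplying by $(a^{\dag})^{*}a^{\dag}a^{\#}$ obtains $a^{\#}=a^{\#}aa^{\dag}$, i.e.\ $a\in R^{EP}$ first. It then finishes by observing that $a^{\dag}a^{3}a^{*}a^{\dag}=a^{2}a^{*}a^{\#}\in PE(R)$ and quoting Theorem~\ref{theorem2.5}. So the paper does \emph{EP first, then delegate to Theorem~\ref{theorem2.5}}, while you do \emph{PI first (from $e^{2}=e$), then EP (from $e^{*}=e$)}. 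Your approach is self-contained and does not require Theorem~\ref{theorem2.5}; the paper's is shorter precisely because it leans on that earlier result. Interestingly, your last paragraph anticipates exactly the pitfall the paper walks into---handling $(a^{\#})^{*}$ in $e^{*}$---but the paper's chosen multiplier $(a^{\dag})^{*}a^{\dag}a^{\#}$ disposes of it cleanly.
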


\begin{proof}
$\Rightarrow$ Assume that $a\in R^{SEP}$. Then $a\in R^{EP}$ and $a^{\dag}a^3a^*a^{\dag}\in PE(R)$ by Theorem \ref{theorem2.5}. This implies $a^2a^*a^{\#}=a^{\dag}a^3a^*a^{\dag}\in PE(R)$.

$\Leftarrow$ From the assumption, we have
$$a^2a^*a^{\#}=(a^2a^*a^{\#})^*=(a^{\#})^*aa^*a^*=(a^{\#})^*aa^*a^*aa^{\dag}=a^2a^*a^{\#}aa^{\dag}.$$
Multiplying the equality on the left by $(a^{\dag})^*a^{\dag}a^{\#}$, one has $a^{\#}=a^{\#}aa^{\dag}$. Hence
$a\in R^{EP}$ by \cite [Theorem 1.2.1] {Mosic}.
Now we have $$a^{\dag}a^3a^*a^{\dag}=a^2a^*a^{\#}\in PE(R).$$ By Theorem \ref{theorem2.5}, $a\in R^{SEP}$.
\end{proof}

\begin{lemma}
\label{lemma2.8}
Let $a\in R^{\#}\cap R^{\dag}$ and $x\in PE(R)$. If $x=aa^+xa^+a$, then $a^+axaa^+\in PE(R)$.
\end{lemma}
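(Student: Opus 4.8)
The plan is to show that $e := a^{+}axaa^{+}$ is a projection by verifying the two defining conditions $e^{*}=e$ and $e^{2}=e$ separately, using the hypothesis $x = aa^{+}xa^{+}a$ together with $x^{2}=x=x^{*}$. First I would record the immediate consequences of the hypothesis: applying the involution to $x = aa^{+}xa^{+}a$ and using $x^{*}=x$ and the fact that $aa^{+},a^{+}a$ are projections gives the same identity back, so no new information comes from that; the useful consequences are rather the absorption relations $aa^{+}\cdot x = a a^{+} (aa^{+}xa^{+}a) = aa^{+}xa^{+}a = x$ and likewise $x\cdot a^{+}a = x$, i.e. $aa^{+}x = x = xa^{+}a$. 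Taking involutions, $xaa^{+} = x = a^{+}ax$ as well. So $x$ is "two-sided absorbed" by both $aa^{+}$ and $a^{+}a$ on both sides.

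Next I would compute $e^{*}$. Since $e = a^{+}axaa^{+}$ and $(a^{+}a)^{*}=a^{+}a$, $(aa^{+})^{*}=aa^{+}$, $x^{*}=x$, we get $e^{*} = (aa^{+})^{*}x^{*}(a^{+}a)^{*} = aa^{+}xa^{+}a = x$ by the hypothesis. On the other hand, using the absorption relations, $e = a^{+}axaa^{+} = a^{+}a\,(aa^{+}x)\,aa^{+} = a^{+}a\,x\,aa^{+}$, and then $a^{+}ax = x$ gives $e = xaa^{+} = x$ again. Hence $e = x = e^{*}$, so $e$ is symmetric; moreover $e = x$ is already idempotent since $x^{2}=x$. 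This actually shows the stronger statement $a^{+}axaa^{+} = x \in PE(R)$.

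The one point that needs a little care — and the only place a reader might worry — is the direction of the absorption: the hypothesis is stated as $x = aa^{+}xa^{+}a$ (factor $aa^{+}$ on the left, $a^{+}a$ on the right), whereas $e$ has $a^{+}a$ on the left and $aa^{+}$ on the right. This is precisely why one must first extract $aa^{+}x = x$ and $xa^{+}a = x$ (by left/right multiplying the hypothesis by $aa^{+}$ resp. $a^{+}a$ and using idempotency of these projections), and then their involutions $a^{+}ax = x$, $xaa^{+} = x$, so that the "wrong-sided" projections in $e$ still collapse $x$ back to itself. Once these four one-sided identities are in hand, the computation that $e = x$ is immediate and the lemma follows; there is no real obstacle beyond keeping the projections straight.
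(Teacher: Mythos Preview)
Your proof is correct and contains the paper's argument as a sub-step: the paper simply observes in one line that $a^{+}axaa^{+} = a^{+}ax^{*}aa^{+} = (aa^{+}xa^{+}a)^{*} = x^{*} = x \in PE(R)$, which is exactly your computation of $e^{*}$. Your derivation of the four absorption identities is valid but unnecessary, since once $e^{*}=x$ you get $e = (e^{*})^{*} = x^{*} = x$ immediately without them.
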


\begin{proof}
Since $x\in PE(R)$, $x=x^*$. Then
$$a^+axaa^+=a^+ax^*aa^+=(aa^+xa^+a)^*=x^*=x\in PE(R).$$
\end{proof}

Corollary \ref{corollary2.7} and Lemma \ref{lemma2.8} imply

\begin{corollary}
\label{corollary2.9}
Let $a\in R^{\#}\cap R^{\dag}$. Then $a\in R^{SEP}$ if and only if $a^+a^3a^*a^{\#}aa^+\in PE(R)$.
\end{corollary}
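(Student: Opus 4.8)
The plan is to derive Corollary \ref{corollary2.9} from Corollary \ref{corollary2.7} together with Lemma \ref{lemma2.8}, exactly as the sentence preceding the statement suggests. Corollary \ref{corollary2.7} tells us that $a\in R^{SEP}$ is equivalent to $a^2a^*a^{\#}\in PE(R)$, so it suffices to relate the projection condition on $x:=a^2a^*a^{\#}$ to the projection condition on $a^+axaa^+=a^+a^3a^*a^{\#}aa^+$ (using $a^+a\cdot a^2=a^3$ after noting $aa^+a=a$, and that $a^{\#}$ absorbs the trailing factor appropriately). Lemma \ref{lemma2.8} gives one direction immediately once we check its hypothesis $x=aa^+xa^+a$.

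For the forward direction ($\Rightarrow$): assume $a\in R^{SEP}$. Then $a\in R^{EP}$, so $aa^+=a^+a$ and this common projection is a two-sided identity on $a$ (and on $a^{\#},a^*$, etc.). Hence $x=a^2a^*a^{\#}$ satisfies $aa^+xa^+a=x$ trivially, and $x\in PE(R)$ by Corollary \ref{corollary2.7}; Lemma \ref{lemma2.8} then yields $a^+axaa^+\in PE(R)$, i.e. $a^+a^3a^*a^{\#}aa^+\in PE(R)$. Alternatively, and even more directly, in the $SEP$ case $a^*=a^{\dag}=a^{\#}$ and $a$ is $EP$, so $a^+a^3a^*a^{\#}aa^+$ collapses to $aa^+\in PE(R)$; I would present whichever is shortest.

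For the converse ($\Leftarrow$): assume $a^+a^3a^*a^{\#}aa^+\in PE(R)$. The goal is to recover $a^2a^*a^{\#}\in PE(R)$ and then invoke Corollary \ref{corollary2.7}. Writing $p=aa^+$ and $q=a^+a$, the hypothesis says $q(a^2a^*a^{\#})p$ is a projection; call it $y$. Taking $y=y^*$ and using $(a^2a^*a^{\#})^*=(a^{\#})^*aa^*a^*$ together with $p^*=p$, $q^*=q$, one gets a relation that, after multiplying on the left by a suitable element from $\chi_a$ (something like $(a^{\dag})^*a^{\dag}a^{\#}$, mirroring the multiplier used in the proof of Corollary \ref{corollary2.7}) and on the right by $a$ or $aa^+$, forces $a^{\#}=a^{\#}aa^+$, hence $a\in R^{EP}$. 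Once $a$ is $EP$, $p=q$ is the identity on all elements of $\chi_a$, so the hypothesis simplifies to $a^2a^*a^{\#}\in PE(R)$, and Corollary \ref{corollary2.7} finishes the argument.

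The main obstacle is the converse: one must extract $EP$-ness from the two-sided ``sandwich'' by $q(\cdot)p$ without already knowing $p=q$. The cleanest route is probably to first peel off the outer $p=aa^+$ on the right and the outer $q=a^+a$ on the left by right/left multiplication and by using Lemma 2.11 of \cite{ZhaoWei} (the cancellation lemma already used in Theorem \ref{theorem2.3}) to strip leading $a^+a^+$-type factors, reducing to the bare statement $a^2a^*a^{\#}\in PE(R)$ or to an intermediate identity like $a^{\#}=a^{\#}aa^+$; then Corollary \ref{corollary2.7} and \cite[Theorem 1.2.1]{Mosic} apply verbatim. I expect the computation to parallel the $\Leftarrow$ part of Corollary \ref{corollary2.7} almost line for line, with the only new bookkeeping being the harmless insertions of $aa^+$ and $a^+a$.
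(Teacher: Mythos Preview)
Your plan coincides with the paper's, whose entire proof of Corollary~\ref{corollary2.9} is the single clause ``Corollary~\ref{corollary2.7} and Lemma~\ref{lemma2.8} imply'' preceding the statement. Your forward direction unpacks this correctly: the element $x=a^2a^*a^{\#}$ always satisfies $aa^+xa^+a=x$ (because $aa^+a^2=a^2$ and $a^{\#}a^+a=(a^{\#})^2aa^+a=(a^{\#})^2a=a^{\#}$), so once Corollary~\ref{corollary2.7} gives $x\in PE(R)$, Lemma~\ref{lemma2.8} yields $a^+axaa^+=a^+a^3a^*a^{\#}aa^+\in PE(R)$.

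For the converse you are in fact more careful than the paper, which offers nothing beyond the same citation; your strategy of first extracting $a\in R^{EP}$ from the self-adjointness of the sandwiched element and then collapsing the outer $a^+a$ and $aa^+$ to recover $a^2a^*a^{\#}\in PE(R)$ is the natural way to complete the argument, though you have (as you acknowledge) not written the cancellation out. One small slip to fix when you do: the parenthetical ``$a^+a\cdot a^2=a^3$'' is not an identity and is not needed---$a^+a\cdot x\cdot aa^+=a^+a^3a^*a^{\#}aa^+$ is just concatenation of factors.
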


Lemma \ref{lemma2.8} implies if $a^+axaa^+\in PE(R)$ and $x^*=x$, then $aa^+xa^+a\in PE(R)$. Hence Theorem \ref{theorem2.5} infers the following corollary.

\begin{corollary}
\label{corollary2.10}
Let $a\in R^{\#}\cap R^{\dag}$. Then $a\in R^{SEP}$ if and only if $a^2a^*a^\#\in PE(R)$.
\end{corollary}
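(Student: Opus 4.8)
Since the displayed equivalence is literally the conclusion of Corollary \ref{corollary2.7}, the shortest proof simply cites that corollary; what follows is the self-contained route suggested by the remark preceding the statement, i.e.\ through Theorem \ref{theorem2.5} and Lemma \ref{lemma2.8}. For the necessity I would use that $a\in R^{SEP}$ makes $a$ an $EP$ element, so $a^\#=a^\dag$, $aa^\dag=a^\dag a$ and $aa^\dag a^2=a^2$; hence
\[
a^2a^*a^\#=a^2a^*a^\dag=aa^\dag a^2a^*a^\dag=(a^\dag a)a^2a^*a^\dag=a^\dag a^3a^*a^\dag ,
\]
which lies in $PE(R)$ by the necessity part of Theorem \ref{theorem2.5}. (One can also see it directly: $a^*=a^\dag=a^\#$ forces $a^2a^*a^\#=a^2(a^\#)^2=(aa^\#)^2=aa^\#=aa^\dag\in PE(R)$.)

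For the sufficiency, set $e:=a^2a^*a^\#$ and assume $e\in PE(R)$, so $e=e^*=(a^\#)^*aa^*a^*$. Using the identity $a^*=a^*aa^\dag$, the plan is first to get $e=eaa^\dag$; then, left-multiplying $e=eaa^\dag$ by $(a^\dag)^*a^\dag a^\#$ and simplifying with the routine identities $a^\#a^2=a$, $a^\dag aa^*=a^*$ and $(aa^\dag)^*a^\#=aa^\dag a^\#=a^\#$, the left-hand side collapses to $a^\#$, giving $a^\#=a^\#aa^\dag$; by \cite[Theorem 1.2.1]{Mosic} this forces $a\in R^{EP}$. With $EP$-ness available, the computation of the necessity part shows $a^\dag a^3a^*a^\dag=e\in PE(R)$, and the sufficiency part of Theorem \ref{theorem2.5} then yields $a\in R^{SEP}$.

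The only genuine obstacle is the sufficiency computation that extracts $a^\#=a^\#aa^\dag$ from the single hypothesis $e\in PE(R)$; this is the same algebraic core as in Corollary \ref{corollary2.7}, and Lemma \ref{lemma2.8} together with the converse observation recorded just above the statement serves only to re-express the projection condition rather than to do new work.
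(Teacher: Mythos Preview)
Your proposal is correct and essentially reproduces the paper's own argument. You rightly observe that the statement duplicates Corollary~\ref{corollary2.7}, and your sufficiency argument---taking $e=e^*=(a^\#)^*aa^*a^*$, using $a^*=a^*aa^\dag$ to get $e=eaa^\dag$, left-multiplying by $(a^\dag)^*a^\dag a^\#$ to obtain $a^\#=a^\#aa^\dag$, deducing $a\in R^{EP}$, and then invoking Theorem~\ref{theorem2.5}---is exactly the proof the paper gives for Corollary~\ref{corollary2.7}. The paper offers no separate proof for Corollary~\ref{corollary2.10} beyond the one-line remark preceding it, so your explicit route is at least as complete as what the paper provides.
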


\section{Equivalent conditions based on $a-$idempotents for an element to be an $SEP$ element}
\label{section3}

Let $e, a\in R$. If $e^2=ae$, then $e$ is called a \emph{left $a-$idempotent}. Similarly, $e$ is called a \emph{right $a-$idempotent} if $e^2=ea$. The following lemma is evident.
\begin{lemma}
\label{lemma3.1}
$e$ is a left $a-$idempotent if and only if $a-e$ is a right $a-$idempotent.
\end{lemma}

\begin{theorem}
\label{theorem3.2}
Let $a\in R^{\#}\cap R^{\dag}$. Then $a\in R^{SEP}$ if and only if $a(a^{\#})^*a^{\dag}$ is a left $a^{\dag}a^2-$idempotent.
\end{theorem}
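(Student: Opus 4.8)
The plan is to exploit the characterization from \cite[Theorem 2.3]{ZhangWangChenWei} that $a\in R^{SEP}$ iff $a(a^{\#})^*a^{\dag}=a^{\dag}a^2$, together with the ``left $a^{\dag}a^2$-idempotent'' condition $\bigl(a(a^{\#})^*a^{\dag}\bigr)^2=a^{\dag}a^2\cdot a(a^{\#})^*a^{\dag}$. For the forward direction, if $a\in R^{SEP}$ then $a(a^{\#})^*a^{\dag}=a^{\dag}a^2$ is itself a genuine idempotent (indeed it equals $a^{\dag}a$), so squaring it and inserting $a^{\dag}a^2$ on the left is trivially consistent; I would just write $\bigl(a(a^{\#})^*a^{\dag}\bigr)^2 = a(a^{\#})^*a^{\dag} = a^{\dag}a^2 a(a^{\#})^*a^{\dag}$ using $a^{\dag}a^2\cdot a^{\dag}a = a^{\dag}a$, which holds since $a$ is $EP$. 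This direction is routine.

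For the converse, I would start from the hypothesis $\bigl(a(a^{\#})^*a^{\dag}\bigr)^2 = a^{\dag}a^2\cdot a(a^{\#})^*a^{\dag}$ and first simplify the left-hand side: since $a^{\dag}a(a^{\#})^*=(a^{\#})^*$ one has $a(a^{\#})^*a^{\dag}\cdot a(a^{\#})^*a^{\dag}= a(a^{\#})^*a^{\dag}a(a^{\#})^*a^{\dag}$, and I expect this to collapse using the identities $a^{\dag}a a^{\#}=a^{\#}$, $a a^{\#}a=a$ and $(a^{\#})^*a^*a^{\#}=(a^{\#})^*$-type relations, or to be simplified by multiplying through by suitable powers of $a$ and $a^*$ on the left/right. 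The goal is to reduce the equation to something of the form $a^{\dag}a^2=a(a^{\#})^*a^{\dag}$, or to the kernel-type relation $a^*a^{\dag}=a^{\dag}a^{\dag}$ that, by \cite[Corollary 2.10]{ZhaoWei}, forces $a\in R^{PI}$, or the relation $a=a^{\dag}a^2$ forcing $a\in R^{EP}$; getting one of $PI$ or $EP$ and then leveraging it (as in the proofs of Theorems 2.1 and 2.2) to get the other yields $a\in R^{SEP}$. Concretely I would try multiplying the hypothesis on the right by $aa^*a^{\dag}a$ (to clear the trailing $a^{\dag}$ and produce an $a$) and on the left by an element like $(a^{\dag})^*a^{\dag}$ or $aa^{\dag}$ to expose a clean relation among $a$, $a^{\dag}$, $a^*$.

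The main obstacle will be the converse bookkeeping: the product $a(a^{\#})^*a^{\dag}$ squared does not obviously telescope because $(a^{\#})^*$ sits between powers of $a$ and $a^{\dag}$ and does not commute past them, so identifying the right left- and right-multipliers to extract a usable identity is the delicate part. I would mitigate this by mirroring the strategy already used successfully in Theorem 2.1, whose hypothesis $a(a^{\#})^*a^{\dag}a^{\#}\in PE(R)$ is structurally very close: there the key moves were to use $(a^{\#})^*=a^{\dag}a(a^{\#})^*$ to absorb a factor and to multiply by $a^2a^*a^{\dag}a$ on the right to land on $a=a^{\dag}a^2$. I expect the same two maneuvers, suitably adapted to the ``left $a^{\dag}a^2$-idempotent'' form, to carry the proof through, after which $a\in R^{EP}$ gives $a^{\#}=a^{\dag}$ and then $a(a^{\#})^*a^{\dag}=a^{\dag}a^2$ reads as $a(a^{\dag})^*a^{\dag}=a$, i.e. $aa^*a^{\dag}a^{\dag}\cdot\!(\dots)$, from which $a\in R^{PI}$ and hence $a\in R^{SEP}$ follows exactly as in Theorem 2.2.
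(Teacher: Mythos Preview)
Your overall strategy for the converse is exactly the paper's: simplify the left side of
\[
\bigl(a(a^{\#})^*a^{\dag}\bigr)^2=a^{\dag}a^2\cdot a(a^{\#})^*a^{\dag}
\]
via $a^{\dag}a(a^{\#})^*=(a^{\#})^*$ to get $a((a^{\#})^*)^2a^{\dag}=a^{\dag}a^3(a^{\#})^*a^{\dag}$, then right-multiply by $aa^*a^{\dag}$ (the paper does this first, obtaining $a(a^{\#})^*a^{\dag}=a^{\dag}a^3a^{\dag}=a^{\dag}a^2(a^{\#})^*a^{\dag}$) and only then by $aa^*a^{\dag}a$ to reach $a=a^{\dag}a^2$, hence $a\in R^{EP}$; after that, $a(a^{\#})^*a^{\dag}=a^{\dag}a^3a^{\dag}=a$ and $a^*=a^*a^{\dag}a=a^*a^{\dag}\cdot a(a^{\#})^*a^{\dag}=a^{\dag}$ gives $a\in R^{PI}$. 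So your plan and the paper's proof coincide in all essential moves.

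Two concrete slips to fix. First, in the forward direction your chain $\bigl(a(a^{\#})^*a^{\dag}\bigr)^2=a(a^{\#})^*a^{\dag}$ is false: when $a\in R^{SEP}$ one has $a(a^{\#})^*a^{\dag}=a^{\dag}a^2=a$, which is not idempotent; the square is $a^2$, not $a$. The forward implication is actually one line: since $e:=a(a^{\#})^*a^{\dag}=a^{\dag}a^2=:b$, trivially $e^2=be$. Second, among the ``identities'' you list for the converse, $a^{\dag}aa^{\#}=a^{\#}$ is \emph{not} valid in general (it is equivalent to $a\in R^{EP}$), so you cannot invoke it before establishing EP; stick to $a^{\dag}a(a^{\#})^*=(a^{\#})^*$, $a^{\dag}aa^*=a^*$, and $(aa^{\#})^*a^{\dag}=a^{\dag}$, which are the ones the paper actually uses to push the right-multiplications through.
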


\begin{proof}
$\Rightarrow$ Since $a\in R^{SEP}$, $a(a^{\#})^*a^{\dag}=a^{\dag}a^2$ by \cite [Theorem 2.3] {ZhangWangChenWei}. So $a(a^{\#})^*a^{\dag}$ is a left $a^{\dag}a^2-$idempotent.

$\Leftarrow$ From the assumption, one has
$(a(a^{\#})^*a^{\dag})^2=a^{\dag}a^2(a(a^{\#})^*a^{\dag})$. Hence
$$a(a^{\#})^*(a^{\#})^*a^{\dag}=a^{\dag}a^3(a^{\#})^*a^{\dag}.$$
Multiplying the equality one the right by $aa^*a^{\dag}$, one gets
$$a(a^{\#})^*a^{\dag}=a^{\dag}a^3a^{\dag}=a^{\dag}a(a^{\dag}a^3a^{\dag})=a^{\dag}a^2(a^{\#})^*a^{\dag}.$$
Multiplying the last equality on the right by $aa^*a^{\dag}a$, one yields $a=a^{\dag}a^2$. Hence $a\in R^{EP}$. It follows that $$a(a^{\#})^*a^{\dag}=a^{\dag}a^3a^{\dag}=a$$
and
$$a^*=a^*a^{\dag}a=a^*a^{\dag}(a(a^{\#})^*a^{\dag})=a^{\dag}.$$
Hence $a\in R^{PI}$ and so $a\in R^{SEP}$.
\end{proof}

Similarly, we have the following theorem by \cite [Theorem 2.3] {ZhangWangChenWei}.

\begin{theorem}
\label{theorem3.3}
Let $a\in R^{\#}\cap R^{\dag}$. Then the followings are equivalent:

(1) $a\in R^{SEP}$;

(2) $a(a^{\#})^*a^{\dag}$ is a right $a^{\dag}a^2-$idempotent;

(3) $a^{\dag}a^2$ if a left $a(a^{\#})^*a^{\dag}-$idempotent;

(4) $a^{\dag}a^2$ is a right $a(a^{\#})^*a^{\dag}-$idempotent.
\end{theorem}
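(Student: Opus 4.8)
The plan is to dispose of the three forward implications for free and then imitate the proof of Theorem \ref{theorem3.2} for each converse. For $(1)\Rightarrow(2),(3),(4)$: by \cite[Theorem 2.3]{ZhangWangChenWei}, $a\in R^{SEP}$ forces $a(a^{\#})^{*}a^{\dag}=a^{\dag}a^{2}$, so putting $e=a(a^{\#})^{*}a^{\dag}=a^{\dag}a^{2}$ each of $(2),(3),(4)$ reduces to the tautology $e^{2}=e\cdot e$. (Equivalently, since $a$ is then an $EP$ element and a partial isometry one has $a(a^{\#})^{*}a^{\dag}=a^{\dag}a^{2}=a$, and the three assertions become ``$a$ is a left or right $a$-idempotent''.)

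For the converses I will use freely the identities $a^{\dag}a(a^{\#})^{*}=(a^{\#})^{*}$, $a^{\#}a^{\dag}a=a^{\#}$, $a^{\dag}aa^{*}=a^{*}=a^{*}aa^{\dag}$, $a^{*}(a^{\#})^{*}=(aa^{\#})^{*}$, $(aa^{\#})^{*}(a^{\#})^{*}=(a^{\#})^{*}$ and $(aa^{\#})^{*}a^{\dag}=a^{\dag}$. Consider $(4)\Rightarrow(1)$. Expanding $(a^{\dag}a^{2})^{2}=(a^{\dag}a^{2})(a(a^{\#})^{*}a^{\dag})$ and using $(a^{\dag}a^{2})^{2}=a^{\dag}a^{3}$ turns the hypothesis into $a^{\dag}a^{3}=a^{\dag}a^{3}(a^{\#})^{*}a^{\dag}$; left-multiplying by $a^{\#}$ (with $a^{\#}a^{\dag}a^{3}=a$) collapses it to
$$a=a(a^{\#})^{*}a^{\dag}.$$
Now left-multiply by $a^{\dag}$ to get $a^{\dag}a=(a^{\#})^{*}a^{\dag}$; right-multiply this by $aa^{\dag}$ to get $a^{\dag}a^{2}a^{\dag}=a^{\dag}a$; left-multiply by $a$ to get $a^{2}a^{\dag}=a$, so $a\in R^{EP}$ (a standard characterization) and $a^{\#}=a^{\dag}$. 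The displayed identity now reads $a=a(a^{\dag})^{*}a^{\dag}$; left-multiplying by $a^{\dag}$ and using $a^{\dag}a=aa^{\dag}$ gives $a^{\dag}a=(a^{\dag})^{*}a^{\dag}$, and left-multiplying by $a^{*}$ gives $a^{*}aa^{\dag}=a^{\dag}$; applying the involution yields $a=(a^{\dag})^{*}$, i.e.\ $a^{*}=a^{\dag}$, so $a\in R^{PI}$ and hence $a\in R^{SEP}$. Case $(2)\Rightarrow(1)$ runs through the same three stages: expand the hypothesis to $a(a^{\#})^{*}(a^{\#})^{*}a^{\dag}=a(a^{\#})^{*}a^{\dag}a^{\dag}a^{2}$, left-multiply by $a^{\dag}$ and then by $a^{*}$ (using the two $(aa^{\#})^{*}$-identities) to reach $(a^{\#})^{*}a^{\dag}=a^{\dag}a^{\dag}a^{2}$, then right-multiply by $a^{\#}$ and by $a$ and left-multiply by $a^{*}$ to reach $a^{\dag}aa^{\#}=a^{\dag}$ --- whence $aa^{\#}=aa^{\dag}$ and $a\in R^{EP}$ --- and finish as before. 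Case $(3)\Rightarrow(1)$ is analogous; one may begin from the remark that $(a^{\dag}a^{2})^{2}=a^{\dag}a^{3}=(a^{\dag}a^{2})a$, so that $(3)$ is equivalent to $(a^{\dag}a^{2})a=(a(a^{\#})^{*}a^{\dag})(a^{\dag}a^{2})$.

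I expect the only delicate point to be the bookkeeping in these converse directions, just as in Theorem \ref{theorem3.2}: at each step the left or right multiplier (drawn from $\chi_{a}$) must be chosen so that exactly the unwanted factors cancel while the useful projection --- $aa^{\dag}$, $a^{\dag}a$ or $aa^{\#}$ --- survives, and a careless choice leaves only a tautology. The structural feature that makes all three cases work is the one visible for $(4)$ above: first collapse the expanded square to a single short identity, use it to force $a\in R^{EP}$, and only afterwards extract $a^{*}=a^{\dag}$.
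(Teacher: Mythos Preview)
The paper gives no proof of Theorem \ref{theorem3.3} beyond the sentence ``Similarly, we have the following theorem by \cite[Theorem 2.3]{ZhangWangChenWei}'', so your proposal is necessarily more detailed than what the paper offers, and it follows exactly the route the paper is pointing at. Your forward implications are correct, and your arguments for $(4)\Rightarrow(1)$ and $(2)\Rightarrow(1)$ are in the spirit of the paper's proof of Theorem \ref{theorem3.2}: expand the idempotent condition, cancel on the left or right by suitable elements of $\chi_a$ (using the identities $a^{\#}a^{\dag}a=a^{\#}$, $a^{\dag}a(a^{\#})^{*}=(a^{\#})^{*}$, $(aa^{\#})^{*}a^{\dag}=a^{\dag}$), first force $a\in R^{EP}$, and then extract $a^{*}=a^{\dag}$ via $a^{\dag}=a^{*}a^{\dag}a$ and \cite[Theorem 1.5.3]{Mosic}. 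One small remark on bookkeeping: in your $(2)\Rightarrow(1)$ sketch the order that actually works is to left-multiply by $a^{*}$ \emph{before} the two right-multiplications, so that $(a^{\#})^{*}a^{\dag}=a^{\dag}a^{\dag}a^{2}$ first becomes $a^{\dag}=a^{*}a^{\dag}a^{\dag}a^{2}$ and then $a^{\dag}aa^{\#}=a^{\dag}$; this is consistent with what you wrote but worth stating explicitly. Your $(3)\Rightarrow(1)$ is left as ``analogous'', which matches the paper's own level of detail for the whole theorem.
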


By Lemma \ref{lemma3.1}, Theorems \ref{theorem3.2} and \ref{theorem3.3}, we have the following theorem.

\begin{theorem}
\label{theorem3.4}
Let $a\in R^{\#}\cap R^{\dag}$. Then the following are equivalent:

(1) $a\in R^{SEP}$;

(2) $a(a^{\#})^*a^{\dag}-a^{\dag}a^2$ is a right $-a^{\dag}a^2-$idempotent;

(3) $a(a^{\#})^*a^{\dag}-a^{\dag}a^2$ is a left $-a^{\dag}a^2-$idempotent;

(4) $a^{\dag}a^2-a(a^{\#})^*a^{\dag}$ is a right $-a(a^{\#})^*a^{\dag}-$idempotent;

(5) $a^{\dag}a^2-a(a^{\#})^*a^{\dag}$ is a right $-a(a^{\#})^*a^{\dag}-$idempotent.
\end{theorem}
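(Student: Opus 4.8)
The plan is to deduce the whole chain of equivalences from the two theorems already established, Theorem \ref{theorem3.2} and Theorem \ref{theorem3.3}, together with the trivial translation Lemma \ref{lemma3.1}. The key observation is that all five conditions are merely rephrasings, via Lemma \ref{lemma3.1}, of statements about $a(a^{\#})^*a^{\dag}$ being a one-sided $a^{\dag}a^2$-idempotent (or vice versa), and each such statement has already been shown equivalent to $a\in R^{SEP}$.

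First I would set $e = a(a^{\#})^*a^{\dag}$ and $f = a^{\dag}a^2$ to lighten the notation. Then I would recall that Lemma \ref{lemma3.1} says precisely: $e$ is a left $f$-idempotent if and only if $f-e$ is a right $f$-idempotent; and symmetrically with the roles of the ambient element shifted. The slightly delicate point is bookkeeping the ``$-f$'' that appears in the statement: one must check that ``$g$ is a right $(-f)$-idempotent'' means $g^2 = g\cdot(-f) = -gf$, and verify that applying Lemma \ref{lemma3.1} to the pair $(e,f)$ indeed produces the element $f-e$ together with the multiplier $-f$ in the way the statement records. Concretely, if $e^2 = fe$ (i.e.\ $e$ is a left $f$-idempotent, which by Theorem \ref{theorem3.2} is equivalent to $a\in R^{SEP}$), then writing $g = f - e$ one has $g$ is a right $(-f)$-idempotent, which is item (2); running the same argument on the left-sided version from Theorem \ref{theorem3.3}(3) gives (3); and interchanging $e$ and $f$ — legitimate because $e$ and $f$ coincide when $a\in R^{SEP}$, while for the converse directions one uses the corresponding items (3),(4) of Theorem \ref{theorem3.3} applied with the roles of $e$ and $f$ swapped — gives (4) and (5).

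For each of (2)--(5) I would argue both implications at once: the forward direction ($a\in R^{SEP}\Rightarrow$ item) is immediate because then $e = f$ by \cite[Theorem 2.3]{ZhangWangChenWei}, so $f-e = 0$, which is vacuously a one-sided idempotent of any type; the reverse direction (item $\Rightarrow a\in R^{SEP}$) follows by expanding the defining equation $g^2 = \mp g f$ (or $\mp f g$), using Lemma \ref{lemma3.1} to convert it back into the corresponding left/right $f$-idempotency of $e$, and then invoking the relevant clause of Theorem \ref{theorem3.2} or Theorem \ref{theorem3.3}.

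The main obstacle I anticipate is not mathematical depth but sign-and-side hygiene: the statement as printed has items (4) and (5) both reading ``right'', and item (2) and (3) differing only in ``right'' versus ``left'', so I would want to double-check which of these is a typo (almost certainly (5) should read ``left'', mirroring the (2)/(3) pair) and make sure the proof I write matches the intended, internally consistent list. Once the correspondence $e \leftrightarrow f$, left $\leftrightarrow$ right, and the sign of the multiplier are pinned down correctly, every implication is a one-line consequence of the cited results, so the proof will be short.
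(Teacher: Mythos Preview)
Your proposal is correct and matches the paper's own approach exactly: the paper offers no detailed proof of Theorem \ref{theorem3.4} beyond the single sentence ``By Lemma \ref{lemma3.1}, Theorems \ref{theorem3.2} and \ref{theorem3.3}, we have the following theorem,'' which is precisely the reduction you outline. Your diagnosis of item (5) as a typo for ``left'' is almost certainly right, and the one slip in your sketch (writing $g=f-e$ where item (2) concerns $e-f$) is exactly the sign-hygiene issue you already flagged---once you note that $h$ is a right $f$-idempotent iff $-h$ is a right $(-f)$-idempotent, the bookkeeping lines up.
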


\begin{theorem}
\label{theorem3.5}
Let $a\in R^{\#}\cap R^{\dag}$. Then $a\in R^{SEP}$ if and only if $a(a^{\#})^*a^{\dag}$ is a right $a-$idempotent.
\end{theorem}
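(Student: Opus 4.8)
The plan is to argue exactly in the spirit of Theorems~\ref{theorem3.2}--\ref{theorem3.4}. Write $e:=a(a^{\#})^{*}a^{\dag}$; the condition to be analysed is $e^{2}=ea$. For the forward implication, if $a\in R^{SEP}$ then $e=a(a^{\#})^{*}a^{\dag}=a^{\dag}a^{2}$ by \cite[Theorem 2.3]{ZhangWangChenWei}, and since $a\in R^{EP}$ this equals $a$; hence $e=a$ and $e^{2}=a^{2}=ea$, so $e$ is a right $a$-idempotent.

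For the converse I would work from $e^{2}=ea$, i.e. $a(a^{\#})^{*}a^{\dag}a(a^{\#})^{*}a^{\dag}=a(a^{\#})^{*}a^{\dag}a$, exploiting the absorption identities that run through this section: $a^{\dag}a(a^{\#})^{*}=(a^{\#})^{*}$ (equivalently $a^{\#}a^{\dag}a=a^{\#}$, which drops out of $a^{\#}=(a^{\#})^{2}a$ and $aa^{\dag}a=a$); $a^{*}(a^{\#})^{*}=(aa^{\#})^{*}$ together with $(aa^{\#})^{*}(a^{\#})^{*}=(a^{\#})^{*}$; and the ring identity $(aa^{\#})^{*}a^{\dag}=a^{\dag}$, which follows from $a^{2}a^{\#}=a$ and $(a^{\dag}a)^{*}=a^{\dag}a$. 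Using the first identity, the left-hand side of the hypothesis collapses to $a(a^{\#})^{*}(a^{\#})^{*}a^{\dag}$; left-multiplying the resulting equation by $a^{\dag}$ and absorbing once more yields the cleaner equation $(a^{\#})^{*}(a^{\#})^{*}a^{\dag}=(a^{\#})^{*}a^{\dag}a$. Left-multiplying \emph{this} by $(a^{*})^{2}$, and simplifying $(a^{*})^{2}(a^{\#})^{*}(a^{\#})^{*}=(aa^{\#})^{*}$ on one side and $(a^{*})^{2}(a^{\#})^{*}=a^{*}$ on the other, gives $(aa^{\#})^{*}a^{\dag}=a^{*}a^{\dag}a$; and the identity $(aa^{\#})^{*}a^{\dag}=a^{\dag}$ then converts this into $a^{\dag}=a^{*}a^{\dag}a$.

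From $a^{\dag}=a^{*}a^{\dag}a$ the rest is routine. Right-multiplying by $a^{\dag}$ gives $a^{\dag}a^{\dag}=a^{*}a^{\dag}$, so $a\in R^{PI}$ by \cite[Corollary 2.10]{ZhaoWei}; in particular $a^{*}=a^{\dag}$ and $(a^{\dag})^{*}=a$. Feeding $a^{*}=a^{\dag}$ back into $a^{\dag}=a^{*}a^{\dag}a$ gives $a^{\dag}=a^{\dag}a^{\dag}a$, and applying the involution (using $(a^{\dag})^{*}=a$) gives $a=a^{\dag}a^{2}$, so $a\in R^{EP}$; therefore $a\in R^{SEP}$.

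The main obstacle I anticipate is locating the correct multipliers in the converse: that after the first simplification one should left-multiply by $a^{\dag}$, then by $(a^{*})^{2}$, and above all that the auxiliary identity $(aa^{\#})^{*}a^{\dag}=a^{\dag}$ is exactly what collapses the surviving equation into the partial-isometry criterion $a^{*}a^{\dag}=a^{\dag}a^{\dag}$. Everything after $a^{\dag}=a^{*}a^{\dag}a$ --- deducing first $PI$ and then $EP$ --- is immediate.
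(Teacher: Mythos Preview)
Your argument is correct and follows essentially the same route as the paper: both simplify the hypothesis to $a(a^{\#})^{*}(a^{\#})^{*}a^{\dag}=a(a^{\#})^{*}a^{\dag}a$ and then cancel on the left to reach $a^{\dag}=a^{*}a^{\dag}a$. The paper does this in one step by left-multiplying with $a^{*}a^{\dag}$, whereas you split it into two multiplications (by $a^{\dag}$, then by $(a^{*})^{2}$); and the paper concludes $a\in R^{SEP}$ directly from $a^{\dag}=a^{*}a^{\dag}a$ via \cite[Theorem 1.5.3]{Mosic}, while you unpack that citation into the $PI$ step (\cite[Corollary 2.10]{ZhaoWei}) followed by the $EP$ step. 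These are cosmetic differences only.
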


\begin{proof}
$\Rightarrow$ It is an immediate result of Theorem \ref{theorem3.3} because $a^{\dag}a^2=a$.

$\Leftarrow$ From the assumption, we have $(a(a^{\#})^*a^{\dag})^2=a(a^{\#})^*a^{\dag}a$. Thus $$a(a^{\#})^*(a^{\#})^*a^{\dag}=a(a^{\#})^*a^{\dag}a.$$
Multiplying the equality on the left by $a^*a^{\dag}$, one obtains $(a^{\#})^*a^{\dag}=a^{\dag}a$ and
$$a^{\dag}=a^*(a^{\#})^*a^{\dag}=a^*a^{\dag}a.$$
Hence $a\in R^{SEP}$ by \cite [Theorem 1.5.3] {Mosic}.
\end{proof}

Similarly, we have the following theorem.

\begin{theorem}
\label{theorem3.6}
Let $a\in R^{\#}\cap R^{\dag}$. Then $a\in R^{SEP}$ if and only if $a^{\dag}a^2$ is a left $(a^{\dag})^*-$idempotent.
\end{theorem}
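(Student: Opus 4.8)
The plan is to dispatch the forward implication directly and then grind the converse down to the two defining features of an $SEP$ element, namely that $a$ is a partial isometry and $a$ is $EP$.

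For the ``only if'' direction: if $a\in R^{SEP}$ then $a^{\#}=a^{\dag}=a^{*}$, so $a$ is $EP$ and $a^{\dag}a^2=a$, while $(a^{\dag})^*=(a^{*})^*=a$. Hence $(a^{\dag}a^2)^2=a^2=(a^{\dag})^*(a^{\dag}a^2)$, so $a^{\dag}a^2$ is a left $(a^{\dag})^*$-idempotent.

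For the ``if'' direction, I would first normalize the hypothesis. Using $aa^{\dag}a=a$ one has $(a^{\dag}a^2)^2=a^{\dag}a(aa^{\dag}a)a=a^{\dag}a^3$, so the left $(a^{\dag})^*$-idempotent condition reads $a^{\dag}a^3=(a^{\dag})^*a^{\dag}a^2$. Multiplying on the right by $a^{\#}$ (using $a^3a^{\#}=a^2$ and $a^2a^{\#}=a$) gives $a^{\dag}a^2=(a^{\dag})^*a^{\dag}a$; multiplying that on the left by $a^{*}$ (using $a^{*}(a^{\dag})^*=(a^{\dag}a)^*=a^{\dag}a$ and $(a^{\dag}a)^2=a^{\dag}a$) gives $a^{*}a^{\dag}a^2=a^{\dag}a$; and multiplying once more on the right by $a^{\#}$ gives $a^{*}a^{\dag}a=a^{\dag}aa^{\#}$. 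Now apply the involution to this last equality and simplify again via $a^{*}(a^{\dag})^*=a^{\dag}a$; this yields $a^{\dag}a^2=(a^{\#})^*a^{\dag}a$. Comparing with $a^{\dag}a^2=(a^{\dag})^*a^{\dag}a$ gives $\bigl((a^{\dag})^*-(a^{\#})^*\bigr)a^{\dag}a=0$, and one further application of the involution produces $a^{\dag}aa^{\dag}=a^{\dag}aa^{\#}$, i.e. $a^{\dag}=a^{\dag}aa^{\#}$. Left-multiplying by $a$ gives $aa^{\dag}=aa^{\#}$; since $aa^{\dag}\in PE(R)$ and $aa^{\#}=a^{\#}a$, this forces $a^{\#}a\in PE(R)$, whence $a\in R^{EP}$ by \cite[Theorem 1.1.3]{Mosic}, and in particular $a^{\dag}a^2=a$.

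To finish, substitute $a^{\dag}a^2=a$ into $a^{*}a^{\dag}a^2=a^{\dag}a$ to get $a^{*}a=a^{\dag}a$, then left-multiply by $a$ to get $aa^{*}a=aa^{\dag}a=a$; hence $a\in R^{PI}$, and combined with $a\in R^{EP}$ we conclude $a\in R^{SEP}$. The routine content is the absorption bookkeeping ($a^3a^{\#}=a^2$, $a^2a^{\#}=a$, $a^{*}(a^{\dag})^*=a^{\dag}a$, and so on); the step that requires genuine care — and the one I expect to be the main obstacle — is realizing that it is exactly the repeated passage through the involution that converts the algebraic identities into the mixed group-inverse/MP-inverse relation $a^{\dag}=a^{\dag}aa^{\#}$, which is what delivers the $EP$ property. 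Everything after that is immediate.
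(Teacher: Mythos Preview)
Your argument is correct, but it is considerably longer than what the paper has in mind. The paper does not spell out a proof of Theorem~\ref{theorem3.6}; it simply says ``similarly'' after Theorem~\ref{theorem3.5}, and the analogous argument is essentially two lines. Starting exactly where you do, from $a^{\dag}a^{2}=(a^{\dag})^{*}a^{\dag}a$ (your step after the first right-multiplication by $a^{\#}$), one applies the involution \emph{immediately}: the left side becomes $a^{*}\,a^{*}(a^{\dag})^{*}=a^{*}a^{\dag}a$, and the right side becomes $(a^{\dag}a)^{*}a^{\dag}=a^{\dag}aa^{\dag}=a^{\dag}$. Thus $a^{\dag}=a^{*}a^{\dag}a$, and \cite[Theorem~1.5.3]{Mosic} gives $a\in R^{SEP}$ directly, exactly as in the proof of Theorem~\ref{theorem3.5}.

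By contrast, you pass from $a^{\dag}a^{2}=(a^{\dag})^{*}a^{\dag}a$ through two further multiplications and an involution to obtain the companion identity $a^{\dag}a^{2}=(a^{\#})^{*}a^{\dag}a$, then compare the two to extract $a^{\dag}=a^{\dag}aa^{\#}$ (hence $EP$), and finally recover $PI$ separately. All of that is valid, and it has the minor expository virtue of making the $EP$ and $PI$ conclusions visible as separate milestones, but it is a detour: the single involution step collapses everything into the Mosi\'c criterion at once. In short, your proof works; the paper's intended route is the same opening move followed by one involution and a citation.
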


\section{Equivalent conditions based on the second and the third power of some products of some elements for an element to be an $SEP$ element}
\label{section4}

\begin{theorem}
\label{theorem4.1}
Let $a\in R^{\#}\cap R^{\dag}$. Then $a\in R^{SEP}$ if and only if $(a(a^{\#})^*a^{\dag})^k=(a^{\dag}a^2)^k$ for $k=2, 3$.
\end{theorem}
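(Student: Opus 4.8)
The forward direction is immediate: if $a\in R^{SEP}$, then by \cite[Theorem 2.3]{ZhangWangChenWei} we have $a(a^{\#})^*a^{\dag}=a^{\dag}a^2$, and raising equal elements to any power gives $(a(a^{\#})^*a^{\dag})^k=(a^{\dag}a^2)^k$ for every $k$, in particular for $k=2,3$. So the real content is the converse: assuming $(a(a^{\#})^*a^{\dag})^k=(a^{\dag}a^2)^k$ for both $k=2$ and $k=3$, deduce $a\in R^{SEP}$.

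For the converse, the plan is to first simplify the powers on both sides. Write $u=a(a^{\#})^*a^{\dag}$ and note $a^{\dag}a^2$ satisfies $(a^{\dag}a^2)^k = a^{\dag}a^{k+1}$ since $a^{\dag}a^2a^{\dag}a^2 = a^{\dag}a\cdot a^2 = a^{\dag}a^3$ using $aa^{\dag}a=a$, and inductively $(a^{\dag}a^2)^k=a^{\dag}a^{k+1}$. For $u$, compute $u^2 = a(a^{\#})^*a^{\dag}a(a^{\#})^*a^{\dag}$; using $a^{\dag}a(a^{\#})^* = (a^{\#})^*$ (which holds because $(a^{\#})^*=(a^{\dag}aa^{\#})^*a\cdot\ldots$ — more simply, $(a^{\#})^* = (a^{\#}aa^{\#})^* $ and one checks $a^{\dag}a(a^{\#})^*=(a^{\#})^*$ from $a^{\#}=a^{\#}aa^{\dag}\cdot\ldots$; in any case this identity is used repeatedly in the preceding proofs), we get $u^2 = a(a^{\#})^*(a^{\#})^*a^{\dag}$ and similarly $u^3 = a(a^{\#})^*(a^{\#})^*(a^{\#})^*a^{\dag}$. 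So the two hypotheses read
$$a((a^{\#})^*)^2a^{\dag} = a^{\dag}a^3, \qquad a((a^{\#})^*)^3a^{\dag} = a^{\dag}a^4.$$
From the first, multiply on the right by $aa^*a^{\dag}$ (as in the proof of Theorem \ref{theorem3.2}, exploiting $(a^{\#})^*a^*a^{\dag}= (a^{\dag})^*\cdot\ldots$ to collapse one factor $(a^{\#})^*$): this should yield $a(a^{\#})^*a^{\dag} = a^{\dag}a^3a^{\dag} = a^{\dag}a\cdot(a^{\dag}a^3a^{\dag}) = a^{\dag}a^2(a^{\#})^*a^{\dag}$ and then, multiplying on the right by $aa^*a^{\dag}a$, one recovers $a = a^{\dag}a^2$, i.e. $a\in R^{EP}$. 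Once $a\in R^{EP}$ we have $a^{\#}=a^{\dag}$, the expression $u$ collapses, and the argument finishes exactly as in Theorem \ref{theorem3.2}: $a(a^{\#})^*a^{\dag}=a$ forces $a^*=a^*a^{\dag}a = a^*a^{\dag}(a(a^{\#})^*a^{\dag}) = a^{\dag}$, hence $a\in R^{PI}$ and therefore $a\in R^{SEP}$.

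I expect the main obstacle to be the bookkeeping in reducing $u^k$ and then extracting $a\in R^{EP}$ from the $k=2$ equation alone — specifically, identifying the right multiplier (here $aa^*a^{\dag}$, resp. $aa^*a^{\dag}a$) that cancels the surplus copies of $(a^{\#})^*$ using identities like $(a^{\#})^*a^* = (a^{\#})^*a^*aa^{\#}\cdot\ldots = aa^{\#}$ and $a^{\dag}a(a^{\#})^*=(a^{\#})^*$. It is conceivable that the $k=2$ hypothesis alone already forces $a\in R^{SEP}$ (making the $k=3$ hypothesis redundant); if the calculation stalls, the $k=3$ equation is available as a second relation — e.g. subtracting or composing it with the $k=2$ relation — but based on the pattern of the earlier proofs in this section I anticipate $k=2$ suffices and the role of listing $k=2,3$ is merely parallelism with later results. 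After $a\in R^{EP}$ is in hand, no further difficulty remains.
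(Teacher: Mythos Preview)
Your forward direction is fine and matches the paper. The gap is in the converse.

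Your reduction of the two hypotheses to
\[
a\bigl((a^{\#})^*\bigr)^2a^{\dag}=a^{\dag}a^3,\qquad a\bigl((a^{\#})^*\bigr)^3a^{\dag}=a^{\dag}a^4
\]
is correct. The error is in the next step: when you right-multiply the $k=2$ equation by $aa^*a^{\dag}$, the left side does collapse to $a(a^{\#})^*a^{\dag}$ as you say, but the right side becomes $a^{\dag}a^3\cdot aa^*a^{\dag}=a^{\dag}a^{4}a^*a^{\dag}$, \emph{not} $a^{\dag}a^{3}a^{\dag}$. The multiplier $aa^*a^{\dag}$ works in Theorem~\ref{theorem3.2} because there the right-hand side still carries a trailing factor $(a^{\#})^*a^{\dag}$; here the right-hand side is just $a^{\dag}a^{3}$, so there is nothing for $aa^*a^{\dag}$ to cancel. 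Consequently the chain leading to $a=a^{\dag}a^{2}$ and $a\in R^{EP}$ does not go through, and your expectation that $k=2$ alone suffices is not supported by this argument.

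The paper's proof genuinely uses both powers, and in a different way: writing $u=a(a^{\#})^*a^{\dag}$ and $v=a^{\dag}a^{2}$, it combines $u^{3}=u^{2}\cdot u$ with the $k=2$ hypothesis $u^{2}=v^{2}$ to obtain $v^{2}u=u^{3}=v^{3}$, i.e.
\[
a^{\dag}a^{4}(a^{\#})^*a^{\dag}=a^{\dag}a^{4}.
\]
This relation is \emph{linear} in $u$, so a left multiplication by $(a^{\#})^{2}$ (using $(a^{\#})^{2}a^{\dag}a^{4}=a$) immediately gives $a(a^{\#})^*a^{\dag}=a$, whence $(a^{\#})^*a^{\dag}=a^{\dag}a$ and $a^{\dag}=a^*a^{\dag}a$, so $a\in R^{SEP}$ by \cite[Theorem 1.5.3]{Mosic}. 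The missing idea in your plan is precisely this combination $v^{2}u=v^{3}$; without it you are left trying to extract information from a relation quadratic in $(a^{\#})^*$, which the multiplier trick from Theorem~\ref{theorem3.2} does not resolve.
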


\begin{proof}
$\Rightarrow$ It is an immediate result of \cite [Theorem 2.3] {ZhangWangChenWei}.

$\Leftarrow$ From the assumption, one gets
$$a^{\dag}a^4=(a^{\dag}a^2)^3=(a(a^{\#})^*a^{\dag})^3=(a(a^{\#})^*a^{\dag})^2a(a^{\#})^*a^{\dag}
=(a^{\dag}a^2)^2a(a^{\#})^*a^{\dag}=a^{\dag}a^4(a^{\#})^*a^{\dag}$$
and
$$a=a^{\#}a^{\#}a^{\dag}a^4=a^{\#}a^{\#}a^{\dag}a^4(a^{\#})^*a^{\dag}=a(a^{\#})^*a^{\dag}.$$
This gives
$$a^{\dag}a=a^{\dag}a(a^{\#})^*a^{\dag}=(a^{\#})^*a^{\dag}$$
and
$$a^*a^{\dag}a=a^*(a^{\#})^*a^{\dag}=a^{\dag}.$$
By \cite [Theorem 1.5.3] {Mosic}, $a\in R^{SEP}$.
\end{proof}

\begin{theorem}
\label{theorem4.2}
Let $a\in R^{\#}\cap R^{\dag}$. Then $a\in R^{SEP}$ if and only if $(aa^*a^{\dag}a^{\dag}a^2)^k\in PE(R)$ for $k=2, 3$.
\end{theorem}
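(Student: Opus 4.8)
The plan for the ``only if'' direction is to read it off Theorem~\ref{theorem2.3}: if $a\in R^{SEP}$ then $aa^*a^{\dag}a^{\dag}a^2=aa^{\dag}$, which is a projection, so $(aa^*a^{\dag}a^{\dag}a^2)^k=aa^{\dag}\in PE(R)$ for every $k\ge1$, in particular for $k=2,3$.

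For the ``if'' direction, set $t=aa^*a^{\dag}a^{\dag}a^2$ and assume $t^2,t^3\in PE(R)$. First I would collapse the two hypotheses into a single identity: since $t^2$ is idempotent, $t^4=t^2$ and hence $t^6=t^2t^4=t^2t^2=t^2$; since $t^3$ is idempotent, $t^6=t^3$; therefore $t^3=t^2$.

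The second and central step is to extract from $t^3=t^2$ the relation $a=a^2a^*a^{\dag}a^{\dag}a^2$ --- precisely the identity that powers the ``if'' part of Theorem~\ref{theorem2.3}. I would carry this out in the same spirit as the reverse implications of Theorems~\ref{theorem2.3} and \ref{theorem4.1}: left-multiply $t^3=t^2$ by $(a^{\#})^*a^{\dag}$ (iterating it if needed, and exploiting identities such as $a^{\dag}aa^*=a^*$, $a^{\#}a^{\#}a^2=aa^{\#}$ and $a^{\#}a^{\#}a^3=a$), right-multiply by suitable products of the form $a^*a^{\dag}a$ to strip off the trailing powers of $a$, and apply the cancellation in \cite[Lemma~2.11]{ZhaoWei} whenever a leading factor $a^{\dag}a^{\dag}$ must be reduced, until the two long words collapse respectively to $a$ and $a^2a^*a^{\dag}a^{\dag}a^2$. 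I expect this word-bookkeeping to be the one genuinely delicate point, simply because $t^2$ and $t^3$ are long words; the reassurance is that every cancellation needed here already occurs, in a shorter instance, inside the proof of Theorem~\ref{theorem2.3}.

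Once $a=a^2a^*a^{\dag}a^{\dag}a^2$ is in hand the conclusion follows mechanically. Left-multiplying by $a^{\#}$ and using $a^{\#}a^2=a$ gives $a^{\#}a=aa^*a^{\dag}a^{\dag}a^2=t$; since $a^{\#}a$ is idempotent, $t^2=t$, and combined with $t^2\in PE(R)$ this yields $t=t^2=(t^2)^*=t^*$, so $t=aa^*a^{\dag}a^{\dag}a^2\in PE(R)$. Theorem~\ref{theorem2.3} then gives $a\in R^{SEP}$.
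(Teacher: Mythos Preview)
Your proposal is correct, and the route genuinely differs from the paper's. Writing $t=aa^*a^{\dag}a^{\dag}a^2$, the paper first exploits the \emph{self-adjointness} of $t^2$ (rewriting $t^2=(t^2)^*=(t^2)^*aa^{\dag}=t^2aa^{\dag}$) and strips this down to $aa^{\#}=aa^{\dag}$, so that $a\in R^{EP}$ follows from the $k=2$ hypothesis alone; only afterwards, with $t$ collapsed to $aa^*$, does it compare idempotence for $k=2,3$ to force $(aa^*)^2=(aa^*)^3$ and hence $a\in R^{PI}$. You instead start from pure idempotence to obtain $t^3=t^2$, postpone self-adjointness to the very end (via $t=t^2\in PE(R)$), and funnel everything through Theorem~\ref{theorem2.3}. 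Your route is conceptually tidy---it literally reduces the present theorem to the earlier one---while the paper's route separates the $EP$ and $PI$ conclusions more transparently.

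Your Step~2 is the only place left as a sketch, but it goes through just as you predict, and in fact more cheaply: left-multiplying $t^3=t^2$ by $(a^{\#})^*a^{\dag}$ and using $a^{\dag}aa^*=a^*$ together with $(aa^{\#})^*a^{\dag}=a^{\dag}$ gives $a^{\dag}a^{\dag}a^2\,t^2=a^{\dag}a^{\dag}a^2\,t$; one application of \cite[Lemma~2.11]{ZhaoWei} yields $a^{\dag}a^2\,t^2=a^{\dag}a^2\,t$; then left-multiplying by $a^{\#}$ (using $a^{\#}a^{\dag}a=a^{\#}$ and $aa^{\#}t=t$) already gives $t^2=t$, whence $a=at$ by the very argument inside Theorem~\ref{theorem2.3}. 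No right-multiplications are needed, and a single pass with the cancellation lemma suffices.
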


\begin{proof}
$\Rightarrow$ By the proof of Theorem \ref{theorem2.3}, $aa^*a^{\dag}a^{\dag}a^2=aa^{\dag}$. Thus
$$(aa^*a^{\dag}a^{\dag}a^2)^k=aa^{\dag}\in PE(R)$$
for $k=2, 3$.

$\Leftarrow$ The condition $(aa^*a^{\dag}a^{\dag}a^2)^2\in PE(R)$ implies
\begin{eqnarray*}
(aa^*a^{\dag}a^{\dag}a^2)^2&=&((aa^*a^{\dag}a^{\dag}a^2)^2)^*=(a^*a^{\dag}a(a^{\dag})^*aa^*)^2\\
&=&(a^*a^{\dag}a(a^{\dag})^*aa^*)^2aa^{\dag}
=(aa^*a^{\dag}a^{\dag}a^2)^2aa^{\dag}.
\end{eqnarray*}
Multiplying the equality on the left by $(a^{\#})^*a^{\dag}a^{\#}(aa^{\#})^*a(a^{\#})^*a^{\dag}$, one obtains $$a^{\dag}a^{\dag}a^2=a^{\dag}a^{\dag}a^3a^{\dag},$$
which gives $a^{\dag}a^2=a^{\dag}a^3a^{\dag}$ by \cite [Lemma 2.11] {ZhaoWei}. Multiplying the equality on the left by $a^{\#}$, one has $aa^{\#}=aa^{\dag}$. Hence $a\in R^{EP}$.
This induces $(aa^*)^k=(aa^*a^{\dag}a^{\dag}a^2)^k\in PE(R)$ for $k=2, 3$.
So $(aa^*)^4=(aa^*)^2$ and $(aa^*)^6=(aa^*)^3$. Hence
$$(aa^*)^4=(aa^*)^2(aa^*)^2=(aa^*)^4(aa^*)^2=(aa^*)^6=(aa^*)^3.$$
Thus $(aa^*)^2=(aa^*)^3$. Multiplying the equality on the left by $((a^{\dag})^*a^{\dag})^2$, one gets $aa^{\dag}=aa^*$.
By \cite [Theorem 1.5.1] {Mosic}, $a\in R^{PI}$. Hence $a\in R^{SEP}$.
\end{proof}

\begin{theorem}
\label{theorem4.3}
Let $a\in R^{\#}\cap R^{\dag}$. Then $a\in R^{SEP}$ if and only if $(a(a^{\#})^*a^{\dag})^k$ is a left $(a^{\dag}a^2)^k-$idempotent for $k=2, 3$.
\end{theorem}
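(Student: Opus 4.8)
The plan is a two-directional proof; throughout write $b = a(a^{\#})^{*}a^{\dag}$ and $c = a^{\dag}a^{2}$, so that ``$(a(a^{\#})^{*}a^{\dag})^{k}$ is a left $(a^{\dag}a^{2})^{k}$-idempotent'' means exactly $b^{2k} = c^{k}b^{k}$. Necessity is immediate: if $a \in R^{SEP}$ then $a \in R^{EP}$, hence $c = a^{\dag}a^{2} = a$, while $b = a^{\dag}a^{2}$ by \cite[Theorem 2.3]{ZhangWangChenWei}; thus $b = c = a$, so $b^{k} = c^{k} = a^{k}$ and $(b^{k})^{2} = a^{2k} = a^{k}a^{k} = c^{k}b^{k}$ for every $k$.

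For sufficiency I would first assemble the bookkeeping identities that every computation of this kind rests on: $c^{k} = a^{\dag}a^{k+1}$; $b^{k} = a\big((a^{\#})^{k}\big)^{*}a^{\dag}$, which follows from $a^{\dag}a(a^{\#})^{*} = (a^{\#})^{*}$ (already exploited in the proof of Theorem~\ref{theorem2.1}); the collapsing rules $a^{\#}a^{\dag}a = a^{\#}$, $(a^{\#})^{j}a = (a^{\#})^{j-1}$ for $j \ge 2$, and their consequences $a^{\#}c = aa^{\#}$ and $a^{\#}c^{k} = a^{k-1}$ for $k \ge 2$; and the shift identity $b^{k}\cdot aa^{*}a^{\dag} = b^{k-1}$ for $k \ge 2$, coming from $a^{\dag}aa^{*} = a^{*}$ and $(a^{\#})^{*}a^{*} = (aa^{\#})^{*}$. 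Then I would run the following chain. Iterating the $k=2$ hypothesis gives $b^{6} = b^{4}b^{2} = (c^{2}b^{2})b^{2} = c^{2}b^{4} = c^{4}b^{2}$, so comparison with the $k=3$ hypothesis yields $c^{4}b^{2} = c^{3}b^{3}$; left multiplication by $(a^{\#})^{3}$, using $(a^{\#})^{3}c^{4} = a$, $(a^{\#})^{3}c^{3} = aa^{\#}$ and $aa^{\#}b^{3} = b^{3}$, collapses this to $ab^{2} = b^{3}$. Consequently $b^{4} = b^{3}b = ab^{3} = a^{2}b^{2}$, so the $k=2$ hypothesis gives $a^{2}b^{2} = c^{2}b^{2}$; right multiplication of the $k=2$ hypothesis by $aa^{*}a^{\dag}$ gives $b^{3} = c^{2}b$, and combined with $b^{3} = ab^{2}$ this yields $ab^{2} = c^{2}b$, i.e. $a^{2}\big((a^{\#})^{2}\big)^{*}a^{\dag} = a^{\dag}a^{4}(a^{\#})^{*}a^{\dag}$; left multiplication by $(a^{\#})^{2}$ now collapses this to the clean relation $aa^{\#}\big((a^{\#})^{2}\big)^{*}a^{\dag} = a(a^{\#})^{*}a^{\dag}$, equivalently $a^{\#}b^{2} = b$ and $b^{2} = ab$. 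Feeding the parallel consequence extracted from the $k=3$ hypothesis (via $b^{4} = c^{3}b$) into the same machinery, one last left multiplication by an appropriate word in $a^{\#}, a^{\dag}, (a^{\#})^{*}$ should upgrade this to $a(a^{\#})^{*}a^{\dag} = a$, that is $b = a$. From $b = a$ the endgame is exactly that of Theorem~\ref{theorem4.1}: $a^{\dag}a = a^{\dag}b = (a^{\#})^{*}a^{\dag}$ and $a^{*}a^{\dag}a = a^{*}(a^{\#})^{*}a^{\dag} = a^{\dag}$, so $a \in R^{SEP}$ by \cite[Theorem 1.5.3]{Mosic}.

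The delicate point is precisely this final collapse. Because the hypotheses only control the squares $b^{2k}$ rather than the powers $b^{k}$ themselves, one cannot cancel as directly as in Theorem~\ref{theorem4.1}; instead one must play the ``halved'' relations $b^{2} = ab$, $b^{3} = ab^{2}$, $b^{3} = c^{2}b$, $b^{4} = c^{3}b$ against one another and choose exactly the right composite multiplier, in the spirit of the multi-factor multiplier used in the proof of Theorem~\ref{theorem4.2}. A possible alternative route is to aim first at $aa^{\#} = aa^{\dag}$ (so $a \in R^{EP}$) and then, deducing $(aa^{*})^{2} = (aa^{*})^{3}$ and hence $aa^{*} = aa^{\dag}$, at $a \in R^{PI}$, which together give $a \in R^{SEP}$; but on either route it is the collapse, not the setup, where the work lies.
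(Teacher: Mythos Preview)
Your setup and the reduction are carried out correctly: the bookkeeping identities are right, the comparison $c^{4}b^{2}=c^{3}b^{3}$ and its collapse to $b^{3}=ab^{2}$ are valid, and the further descent to $b^{2}=ab$ (equivalently $a^{\#}b^{2}=b$) is clean. The gap is exactly where you flag it: after $b^{2}=ab$ you write that ``one last left multiplication \ldots\ should upgrade this to $b=a$'', but no such single multiplication is exhibited, and the proposal ends with the admission that ``it is the collapse, not the setup, where the work lies''. As written, the sufficiency direction is therefore unfinished. In fact the collapse \emph{can} be completed, but it takes several moves rather than one. From $b^{2}=ab$, right-multiply by $a$ to get $a((a^{\#})^{*})^{2}=a^{2}(a^{\#})^{*}$; right-multiply by $(a^{*})^{2}$ and then left-multiply by $a^{\dag}$ to obtain $(aa^{\#})^{*}=a^{\dag}a^{2}a^{*}$; finally right-multiply by $(a^{\dag})^{*}$ to reach the key identity $(a^{\#})^{*}=a^{\dag}a^{2}$. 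From this, left-multiplying by $a$ gives $a(a^{\#})^{*}=a^{2}$, whose adjoint $a^{\#}a^{*}=(a^{*})^{2}$ yields $(aa^{*})^{2}=aa^{*}$ after left-multiplication by $a^{2}$, so $a\in R^{PI}$. Independently, right-multiplying $(a^{\#})^{*}=a^{\dag}a^{2}$ by $aa^{\dag}$ (using $(a^{\#})^{*}aa^{\dag}=(a^{\#})^{*}$) gives $a^{\dag}a^{2}=a^{\dag}a^{3}a^{\dag}$; left-multiplying by $a$ gives $a^{2}=a^{3}a^{\dag}$, and then by $(a^{\#})^{2}$ gives $aa^{\#}=aa^{\dag}$, so $a\in R^{EP}$. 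Together these yield $a\in R^{SEP}$.

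The paper organises the argument differently: it aims first for $a\in R^{EP}$ directly from the $k=2$ relation (via the intermediate step $aa^{\dag}=a^{\dag}a^{2}a^{\dag}$), then rewrites both hypotheses under EP as $((a^{\#})^{*})^{4}=a^{2}((a^{\#})^{*})^{2}$ and $((a^{\#})^{*})^{6}=a^{3}((a^{\#})^{*})^{3}$, and combines these to force $(a^{\#})^{*}=a$. Your route instead mixes $k=2$ and $k=3$ from the start to land on the single relation $b^{2}=ab$; once the missing collapse is supplied as above, this is arguably more economical, and it incidentally yields the companion to Theorem~\ref{theorem3.5}: $a\in R^{SEP}$ if and only if $a(a^{\#})^{*}a^{\dag}$ is a \emph{left} $a$-idempotent.
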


\begin{proof}
$\Rightarrow$ Since $a\in R^{SEP}$, $a(a^{\#})^*a^{\dag}=a^{\dag}a^2$. Hence $(a(a^{\#})^*a^{\dag})^k$ is a left $(a^{\dag}a^2)^k-$idempotent for $k=2, 3$.

$\Leftarrow$ Following the assumption, one has
\begin{equation}\label{eq:4.1}
a((a^{\#})^*)^4a^{\dag}=(a(a^{\#})^*a^{\dag})^4=(a^{\dag}a^2)^2(a(a^{\#})^*a^{\dag})^2=a^{\dag}a^4(a^{\#})^*(a^{\#})^*a^{\dag}.
\end{equation}
and
\begin{equation}\label{eq:4.2}
a((a^{\#})^*)^6a^{\dag}=(a(a^{\#})^*a^{\dag})^6=(a^{\dag}a^2)^3(a(a^{\#})^*a^{\dag})^3=a^{\dag}a^5((a^{\#})^*)^3a^{\dag}.
\end{equation}
Multiplying (\ref{eq:4.1}) on the left by $a^{\dag}a$, one gets
\begin{equation}\label{eq:4.3}
a((a^{\#})^*)^4a^{\dag}=a^{\dag}a^2((a^{\#})^*)^4a^{\dag}.
\end{equation}
Multiplying (\ref{eq:4.3}) on the right by $a(a^*)^4a^{\dag}$, one obtains
\begin{equation}\label{eq:4.4}
aa^{\dag}=a^{\dag}a^2a^{\dag}.
\end{equation}
So $a\in R^{EP}$ and (\ref{eq:4.1}) and (\ref{eq:4.2}) become into the following equalities.
\begin{equation}\label{eq:4.5}
a((a^{\#})^*)^4a^{\dag}=a^3(a^{\#})^*(a^{\#})^*a^{\dag}.
\end{equation}
\begin{equation}\label{eq:4.6}
a((a^{\#})^*)^6a^{\dag}=a^4((a^{\#})^*)^3a^{\dag}.
\end{equation}
Multiplying (\ref{eq:4.5}) and (\ref{eq:4.6}) on the left by $a^{\dag}$ and on the right by $a$, one has
$$((a^{\#})^*)^4=a^2(a^{\#})^*(a^{\#})^*$$
and
$$((a^{\#})^*)^6=a^3((a^{\#})^*)^3.$$
Then
$$a^2((a^{\#})^*)^4=a^2(a^{\#})^*(a^{\#})^*((a^{\#})^*)^2=((a^{\#})^*)^4((a^{\#})^*)^2=((a^{\#})^*)^6=a^3((a^{\#})^*)^3.$$
Multiplying the above equality on the left by $(a^{\#})^2$ and on the right by $(a^*)^3$, and noting that $a\in R^{EP}$, one gets
$(a^{\#})^*=a$. So $a\in R^{SEP}$.
\end{proof}

\section{Equivalent conditions based on $a-$equivalent relation for an element to be an $SEP$ element}
\label{section5}

Let $a, b, c\in R$. If $ab=ac$, then $b$ and $c$ are said to be \emph{left $a-$equivalent}. Correspondingly, if $ba=ca$, then $b$ and $c$ are said to
be \emph{right $a-$equivalent}.

\begin{theorem}
\label{theorem5.1}
Let $a\in R^{\#}\cap R^{\dag}$. Then $a\in R^{SEP}$ if and only if $a(a^{\#})^*a^{\dag}$ and $a^{\dag}a^2$ are left $a-$equivalent.
\end{theorem}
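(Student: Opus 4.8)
The plan is to prove Theorem \ref{theorem5.1} along the same lines as the earlier characterizations, using the key identity $a(a^{\#})^*a^{\dag}=a^{\dag}a^2$ from \cite[Theorem 2.3]{ZhangWangChenWei}. The forward direction is immediate: if $a\in R^{SEP}$ then $a(a^{\#})^*a^{\dag}=a^{\dag}a^2$, so the two elements are literally equal, hence in particular left $a$-equivalent (indeed left $b$-equivalent for every $b$). The substance is the converse, where the hypothesis is only $a\cdot a(a^{\#})^*a^{\dag}=a\cdot a^{\dag}a^2$, i.e. $a^2(a^{\#})^*a^{\dag}=a^2 a^{\dag}a^2 = a^3$ (using $aa^{\dag}a=a$ twice: $a\,a^{\dag}a^2 = a^2$, so actually $a\cdot a^{\dag}a^2 = a^2$, giving $a^2(a^{\#})^*a^{\dag}=a^2$).

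So the starting equation is $a^2(a^{\#})^*a^{\dag}=a^2$. First I would try to "cancel" the leading $a^2$ by multiplying on the left by a suitable element. Noting $(a^{\#})^2 a^2 = a^{\#}a = aa^{\#}$ and $aa^{\#}a = a$, multiplying on the left by $(a^{\#})^2$ gives $aa^{\#}(a^{\#})^*a^{\dag}=aa^{\#}$; hmm, this still carries the group-idempotent factor $aa^{\#}$. A cleaner route: multiply $a^2(a^{\#})^*a^{\dag}=a^2$ on the right by $a$ to get $a^2(a^{\#})^*a^{\dag}a = a^3$, and recall that from \cite[Theorem 2.3]{ZhangWangChenWei} the target equality $a(a^{\#})^*a^{\dag}=a^{\dag}a^2$ is what forces $SEP$; so I want to recover $a(a^{\#})^*a^{\dag}=a^{\dag}a^2$ from $a^2(a^{\#})^*a^{\dag}=a^2$. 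Multiplying the latter on the left by $a^{\dag}a^{\#}$ yields $a^{\dag}a(a^{\#})^*a^{\dag} = a^{\dag}a$ (since $a^{\dag}a^{\#}a^2 = a^{\dag}a^{\#}a\cdot a = a^{\dag}a$, as $a^{\#}a^2 = a$ so $a^{\dag}a^{\#}a^2 = a^{\dag}a$). Using $(a^{\#})^* = a^{\dag}a(a^{\#})^*$ (which holds because $(a^{\#})^* = (a^{\#}aa^{\#})^* = (a^{\#})^*a^*(a^{\#})^*$ and more directly $a^{\dag}a(a^{\#})^* = a^{\dag}a(a^{\#}aa^{\#})^* = a^{\dag}a (a^{\#})^*(aa^{\#})^*$ — I would instead use the cleaner fact, appearing already in the proof of Theorem \ref{theorem2.1}, that $(a^{\#})^* = a^{\dag}a(a^{\#})^*$), the equation $a^{\dag}a(a^{\#})^*a^{\dag}=a^{\dag}a$ becomes $(a^{\#})^*a^{\dag}=a^{\dag}a$, and then multiplying on the left by $a$ gives $a(a^{\#})^*a^{\dag}=aa^{\dag}a=a=a^{\dag}a\cdot a^{\dag}a^2$? — not quite; rather $a(a^{\#})^*a^{\dag}=aa^{\dag}a=a$, which shows $a\in R^{EP}$ when combined with $a^{\dag}a^2 = a$ being equivalent. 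Actually $(a^{\#})^*a^{\dag}=a^{\dag}a$ together with applying the involution gives $(a^{\dag})^*a^{\#}=a^{\dag}a$, and multiplying by $a^{\dag}$ on appropriate sides should deliver $a^*a^{\dag}a=a^{\dag}$ or $a^{\dag}a^2=a$, placing us exactly in the hypothesis of \cite[Theorem 1.5.3]{Mosic} as in the proof of Theorem \ref{theorem3.5}.

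The concrete strategy I would commit to: from $a^2(a^{\#})^*a^{\dag}=a^2$, derive $(a^{\#})^*a^{\dag}=a^{\dag}a$ by left-multiplying by $(a^{\#})^2$ and then by $a^{\dag}a$ and invoking $(a^{\#})^* = a^{\dag}a(a^{\#})^*$; then apply the involution to get $(a^{\dag})^*a^{\#}=a^{\dag}a$; multiply this on the left by $a^*$ to obtain $a^*(a^{\dag})^*a^{\#}=a^*a^{\dag}a$, i.e. $(a^{\dag}a)^*a^{\#}=a^*a^{\dag}a$, i.e. $a^{\dag}aa^{\#}=a^*a^{\dag}a$, i.e. $a^{\dag}a^{\#}\cdot a = \ldots$ — after simplification this should give $a^{\dag}=a^*a^{\dag}a$, and combined with $(a^{\#})^*a^{\dag}=a^{\dag}a$ (from which $a^{\dag}a = (a^{\#})^*a^{\dag}$, and left-multiplying by $a^*$ gives $a^*a^{\dag}a = a^*(a^{\#})^*a^{\dag} = (a^{\#}a)^*a^{\dag} = (aa^{\#})^*a^{\dag} = aa^{\#}a^{\dag}$), we reach both $(a^{\#})^*a^{\dag}=a^{\dag}a$ and $a^{\dag}=a^*a^{\dag}a$, which is exactly the criterion of \cite[Theorem 1.5.3]{Mosic} used verbatim in Theorems \ref{theorem3.5} and \ref{theorem4.1}, yielding $a\in R^{SEP}$. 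The main obstacle I anticipate is the bookkeeping in the chain of cancellations — in particular choosing the multiplier (some word in $a^{\#}$, $a^{\dag}$, $(a^{\#})^*$) that cleanly strips the $a^2$ prefix without introducing an $aa^{\#}$ or $a^{\dag}a$ factor I cannot later remove; if the single-step cancellation is awkward, the fallback is the two-stage approach of first proving $a\in R^{EP}$ (by producing $aa^{\dag}=a^{\dag}a^2a^{\dag}$ or $aa^{\#}=aa^{\dag}$, as in Theorem \ref{theorem4.2}) and then, under $EP$, the equation $a^2(a^{\#})^*a^{\dag}=a^2$ simplifies drastically to give $(a^{\#})^*=a$ directly, hence $SEP$.
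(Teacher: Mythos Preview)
Your overall route is the paper's route: from the left $a$-equivalence hypothesis you correctly extract $a^2(a^{\#})^*a^{\dag}=a^2$, then cancel the $a^2$ prefix by a suitable left multiplier to reach $a^{\dag}=a^*a^{\dag}a$ and invoke \cite[Theorem 1.5.3]{Mosic}. The paper simply does this in a single stroke --- multiply $a^2(a^{\#})^*a^{\dag}=a^2$ on the left by $a^*a^{\dag}a^{\#}$, use $a^{\#}a^2=a$, $a^{\dag}a(a^{\#})^*=(a^{\#})^*$ and $(aa^{\#})^*a^{\dag}=a^{\dag}$, and obtain $a^{\dag}=a^*a^{\dag}a$ immediately --- whereas you split this into two multiplications (first by $a^{\dag}a^{\#}$ or $(a^{\#})^2$ then $a^{\dag}a$, then by $a^*$). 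That is the same argument, just factored.

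Two concrete slips in your write-up to fix. First, in the line ``$(aa^{\#})^*a^{\dag}=aa^{\#}a^{\dag}$'' you are silently assuming $(aa^{\#})^*=aa^{\#}$, which is exactly the $EP$ condition you have not yet established; the correct identity here is $(aa^{\#})^*a^{\dag}=a^{\dag}$ (proved via $a^{\dag}a=(a^{\#}a)^*a^{\dag}a$), and with it your multiplication by $a^*$ on $(a^{\#})^*a^{\dag}=a^{\dag}a$ gives $a^{\dag}=a^*a^{\dag}a$ directly. Second, your ``involution branch'' actually yields $a^{\dag}aa^{\#}=a^{\#}$, hence $a^{\#}=a^*a^{\dag}a$, not $a^{\dag}=a^*a^{\dag}a$; so that branch does not land on the cited criterion and should be dropped in favor of the direct multiplication. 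With these corrections your argument coincides with the paper's proof.
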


\begin{proof}
$\Rightarrow$ Since $a\in R^{SEP}$, $a(a^{\#})^*a^{\dag}=a^{\dag}a^2$ by \cite [Theorem 2.3] {ZhangWangChenWei}. So $a(a^{\#})^*a^{\dag}$ and $a^{\dag}a^2$ are left $a-$equivalent.

$\Leftarrow$ From the assumption, we have $a^2(a^{\#})^*a^{\dag}=aa^{\dag}a^2=a^2$.
Multiplying the equality on the left by $a^*a^{\dag}a^{\#}$, one gets $a^{\dag}=a^*a^{\dag}a$.
Hence $a\in R^{SEP}$ by \cite [Theorem 1.5.3] {Mosic}.
\end{proof}

\begin{theorem}
\label{theorem5.2}
Let $a\in R^{\#}\cap R^{\dag}$. Then $a\in R^{SEP}$ if and only if $a(a^{\#})^*a^{\dag}$ and $a^{\dag}a^2$ are left $x-$equivalent for some
$x\in \rho_a=\{a, a^{\#}, a^{\dag}, a^*, (a^{\dag})^*, (a^{\#})^*, (a^{\#})^{\dag}, (a^{\dag})^{\#}\}$.
\end{theorem}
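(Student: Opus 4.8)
The plan is to mirror the structure of the preceding theorems in this section. The forward implication is immediate: if $a\in R^{SEP}$ then $a(a^{\#})^{*}a^{\dag}=a^{\dag}a^{2}$ by \cite[Theorem 2.3]{ZhangWangChenWei}, so, writing $P=a(a^{\#})^{*}a^{\dag}$ and $Q=a^{\dag}a^{2}$, the elements $P$ and $Q$ are left $x$-equivalent for every $x$, in particular for $x=a\in\rho_{a}$. For the converse I would first record the elementary identities available for $a\in R^{\#}\cap R^{\dag}$: $a^{\#}=(a^{\#})^{2}a=a(a^{\#})^{2}$, $\,a^{2}a^{\#}=a^{\#}a^{2}=a$, $\,a^{\#}a^{\dag}a=a^{\#}$, $\,aa^{\dag}a^{\#}=a^{\#}$, and --- obtained from the last two by applying $*$, using that $a^{\dag}a$ and $aa^{\dag}$ are projections and that $a^{\dag}a^{2}a^{\#}=a^{\dag}a$ --- the ``transfer'' identities
\[
a^{\dag}a(a^{\#})^{*}=(a^{\#})^{*},\qquad (aa^{\#})^{*}a^{\dag}=a^{\dag},\qquad a^{\dag}(aa^{\#})^{*}=a^{\dag},
\]
together with $a^{*}(a^{\#})^{*}=(aa^{\#})^{*}$. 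These are the workhorses.

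Now assume $xP=xQ$ for some $x\in\rho_{a}$, and split $\rho_{a}=\{a,a^{\#},(a^{\dag})^{*}\}\cup\{a^{\dag},a^{*},(a^{\#})^{*},(a^{\#})^{\dag},(a^{\dag})^{\#}\}$. If $x$ lies in the first set, there is $w\in R$ with $wx=a$ --- namely $w=1$, $w=a^{2}$, $w=aa^{*}$ respectively, each checked in one line (e.g.\ $aa^{*}(a^{\dag})^{*}=a(a^{\dag}a)^{*}=a$). Multiplying $xP=xQ$ on the left by $w$ gives $aP=aQ$, i.e.\ $P$ and $Q$ are left $a$-equivalent, so $a\in R^{SEP}$ by Theorem \ref{theorem5.1}.

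If $x$ lies in the second set, I would instead produce $w\in R$ with $wx=a^{\dag}$: $w=1$ for $x=a^{\dag}$; $w=a^{\dag}(a^{\dag})^{*}$ for $x=a^{*}$; $w=a^{\dag}a^{*}$ for $x=(a^{\#})^{*}$; $w=a^{\dag}a^{\#}$ for $x=(a^{\#})^{\dag}$; $w=(a^{\dag})^{2}$ for $x=(a^{\dag})^{\#}$ --- each verified using the identities above, the fact that $bb^{\dag}$ (resp.\ $b^{\#}(b^{\#})^{\dag}$) is the projection with the same image as $aa^{\dag}$ and hence equals it, and $b^{\#}=(b^{\#})^{2}b$. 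Multiplying $xP=xQ$ on the left by $w$ gives $a^{\dag}P=a^{\dag}Q$, which by $a^{\dag}a(a^{\#})^{*}=(a^{\#})^{*}$ becomes $(a^{\#})^{*}a^{\dag}=(a^{\dag})^{2}a^{2}$. Left-multiplying by $a^{*}$ and using $a^{*}(a^{\#})^{*}=(aa^{\#})^{*}$ and $(aa^{\#})^{*}a^{\dag}=a^{\dag}$ yields $a^{\dag}=a^{*}(a^{\dag})^{2}a^{2}$. Right-multiplying this by $a^{\#}a$ and re-substituting it (since $a^{2}a^{\#}a=a^{2}$, the right side returns $a^{*}(a^{\dag})^{2}a^{2}=a^{\dag}$) gives $a^{\dag}a^{\#}a=a^{\dag}$, that is $a^{\dag}aa^{\#}=a^{\dag}$; left-multiplying by $a$ then gives $aa^{\#}=aa^{\dag}$, so $a\in R^{EP}$ and $a^{\dag}=a^{\#}$. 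Finally $a^{\dag}=a^{*}(a^{\dag})^{2}a^{2}=a^{*}(a^{\#})^{2}a^{2}=a^{*}a^{\#}a=a^{*}a^{\dag}a$ (using $(a^{\#})^{2}a^{2}=a^{\#}a$), so $a\in R^{SEP}$ by \cite[Theorem 1.5.3]{Mosic}.

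The hard part is the second set. Unlike $a,a^{\#},(a^{\dag})^{*}$, its elements cannot be left-cancelled all the way down to $a$ (morally their column spaces are that of $a^{\dag}$ rather than of $a$, so an identity $wx=a$ would already force $a$ to be EP); consequently the hypothesis collapses only to left $a^{\dag}$-equivalence, Theorem \ref{theorem5.1} is unavailable, and one is forced to extract EP-ness first and the partial-isometry property afterwards. The genuine bookkeeping lies precisely in choosing the multipliers $w$ and in verifying the transfer identities displayed above; everything else is routine manipulation.
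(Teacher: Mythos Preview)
Your argument is correct and structurally parallel to the paper's: both split $\rho_a$ into the ``$a$-like'' triple $\{a,a^{\#},(a^{\dag})^*\}$ and the ``$a^{\dag}$-like'' remainder, and in the second case both arrive at $(a^{\#})^*a^{\dag}=(a^{\dag})^2a^2$, deduce $a\in R^{EP}$, and finish via $a^{\dag}=a^*a^{\dag}a$ and \cite[Theorem 1.5.3]{Mosic}. The mechanical difference is that the paper inserts the idempotent $x^{\dag}x$ (which equals $a^{\dag}a$ on $\tau_a$ and $aa^{\dag}$ on $\gamma_a$) into the chain for $(a^{\#})^*a^{\dag}$, whereas you produce explicit left multipliers $w$ with $wx=a$ or $wx=a^{\dag}$; your device lets you invoke Theorem~\ref{theorem5.1} directly for the first three cases and also absorbs $(a^{\#})^{\dag}$ and $(a^{\dag})^{\#}$ into the second group without a separate argument, which is a mild streamlining over the paper's four-case treatment.
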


\begin{proof}
$\Rightarrow$ By Theorem \ref{theorem5.1}, $a(a^{\#})^*a^{\dag}$ and $a^{\dag}a^2$ are left $a-$equivalent. Choosing $x=a$, we are done.

$\Leftarrow$ From the assumption, there exists $x\in \rho_a$ such that $xa(a^{\#})^*a^{\dag}=xa^{\dag}a^2$. If $x\in \tau_a$, then $x^{\dag}x=a^{\dag}a$. It follows that
\begin{eqnarray*}
(a^{\#})^*a^{\dag}&=&a^{\dag}a(a^{\#})^*a^{\dag}=a^{\dag}a^{\#}aa^{\dag}a^2(a^{\#})^*a^{\dag}
=a^{\dag}a^{\#}a(x^{\dag}x)a(a^{\#})^*a^{\dag}\\
&=&a^{\dag}a^{\#}ax^{\dag}xa^{\dag}a^2=a^{\dag}a^{\#}aa^{\dag}aa^{\dag}a^2=a^{\dag}a^{\#}a^2=a^{\dag}a.
\end{eqnarray*}
Hence
$$a^*a^{\dag}a=a^*(a^{\#})^*a^{\dag}=a^{\dag}.$$
By \cite [Theorem 1.5.3] {Mosic}, $a\in R^{SEP}$.

If $x\in \gamma_a$, then $x^{\dag}x=aa^{\dag}$. It follows that
\begin{eqnarray*}
(a^{\#})^*a^{\dag}&=&a^{\dag}a(a^{\#})^*a^{\dag}=a^{\dag}aa^{\dag}a(a^{\#})^*a^{\dag}=a^{\dag}x^{\dag}xa(a^{\#})^*a^{\dag}\\
&=&a^{\dag}x^{\dag}xa^{\dag}a^2=a^{\dag}aa^{\dag}a^{\dag}a^2=a^{\dag}a^{\dag}a^2
\end{eqnarray*}
and
$$a^{\dag}=a^*(a^{\#})^*a^{\dag}=a^*a^{\dag}a^{\dag}a^2=(a^*a^{\dag}a^{\dag}a^2)a^{\dag}a=a^{\dag}a^{\dag}a.$$
Thus $a\in R^{EP}$. This induces $(a^{\#})^*a^{\dag}=a^{\dag}a^{\dag}a^2=a^{\dag}a$. Hence $a\in R^{SEP}$.

If $x=(a^{\#})^{\dag}=a^{\dag}a^3a^{\dag}$, then one gets
$a^{\dag}a^3a^{\dag}a(a^{\#})^*a^{\dag}=a^{\dag}a^3a^{\dag}a^{\dag}a^2$. Thus
$$a^{\dag}a^3(a^{\#})^*a^{\dag}=a^{\dag}a^3a^{\dag}a^{\dag}a^2.$$
Multiplying the equality on the left by $a^{\dag}a^{\#}$, one has $(a^{\#})^*a^{\dag}=a^{\dag}a^{\dag}a^2$. Hence $a\in R^{SEP}$ by the above proof.

If $x=(a^{\dag})^{\#}=(aa^{\#})^*a(aa^{\#})^*$, then
$$(aa^{\#})^*a(aa^{\#})^*a(a^{\#})^*a^{\dag}=(aa^{\#})^*a(aa^{\#})^*a^{\dag}a^2=(aa^{\#})^*a^2.$$
Multiplying the equality on the left by $a^{\dag}a^{\dag}$, one obtains $(a^{\#})^*a^{\dag}=a^{\dag}a^{\dag}a^2$. Hence $a\in R^{SEP}$.
\end{proof}

\begin{theorem}
\label{theorem5.3}
Let $a\in R^{\#}\cap R^{\dag}$. Then $a\in R^{SEP}$ if and only if $a(a^{\#})^*a^{\dag}$ and $a^{\dag}a^2$ are left $a^{\dag}a^2-$equivalent .
\end{theorem}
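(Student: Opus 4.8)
The plan is to treat the two implications separately; the forward direction is immediate, and the converse is a short reduction to the equality already exploited in the proof of Theorem~\ref{theorem5.1}.

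For the forward direction: if $a\in R^{SEP}$, then $a(a^{\#})^*a^{\dag}=a^{\dag}a^2$ by \cite[Theorem 2.3]{ZhangWangChenWei}, so the two elements are literally equal and hence left $x$-equivalent for every $x\in R$; in particular they are left $a^{\dag}a^2$-equivalent.

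For the converse, first I would unwind the hypothesis. By definition, $a(a^{\#})^*a^{\dag}$ and $a^{\dag}a^2$ being left $a^{\dag}a^2$-equivalent means $(a^{\dag}a^2)(a(a^{\#})^*a^{\dag})=(a^{\dag}a^2)(a^{\dag}a^2)$. Using $aa^{\dag}a=a$ one computes $(a^{\dag}a^2)^2=a^{\dag}a^3$, so the hypothesis reads
$$a^{\dag}a^3(a^{\#})^*a^{\dag}=a^{\dag}a^3 .$$
Now I would multiply this equality on the left by $a^{\#}a$. Since $a^{\#}a\cdot a^{\dag}a^3=a^{\#}(aa^{\dag}a)a^2=a^{\#}a^3=a^2$ (using $a^{\#}a^2=a$), the left-hand side collapses to $a^2(a^{\#})^*a^{\dag}$ and the right-hand side collapses to $a^2$, that is, $a^2(a^{\#})^*a^{\dag}=a^2$. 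This is exactly the identity reached in the proof of Theorem~\ref{theorem5.1}, and from here the same argument finishes the job: multiplying $a^2(a^{\#})^*a^{\dag}=a^2$ on the left by $a^*a^{\dag}a^{\#}$ and simplifying by means of $a^{\#}a^2=a$, $a^{\dag}a(a^{\#})^*=(a^{\#})^*$ and $(aa^{\#})^*a^{\dag}=a^{\dag}$ gives $a^{\dag}=a^*a^{\dag}a$, whence $a\in R^{SEP}$ by \cite[Theorem 1.5.3]{Mosic}.

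I do not expect a genuine obstacle: the whole converse is bookkeeping with the group inverse and the Moore--Penrose inverse. The only points needing a little care are the auxiliary identities $a^{\#}a^2=a$, $a^{\dag}a(a^{\#})^*=(a^{\#})^*$ (equivalently $a^{\#}a^{\dag}a=a^{\#}$, which follows from $a^{\#}=(a^{\#})^2a$ and $aa^{\dag}a=a$) and $(aa^{\#})^*a^{\dag}=a^{\dag}$ (equivalently $(a^{\dag})^*aa^{\#}=(a^{\dag})^*$, which follows from $(a^{\dag})^*=(a^{\dag})^*a^{\dag}a$ and $a^2a^{\#}=a$); all of these are already used silently in the earlier proofs of this paper.
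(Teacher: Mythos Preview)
Your proof is correct and essentially the same as the paper's. The only cosmetic difference is that after obtaining $a^{\dag}a^3(a^{\#})^*a^{\dag}=a^{\dag}a^3$ you multiply on the left by $a^{\#}a$ to reduce to the identity $a^2(a^{\#})^*a^{\dag}=a^2$ of Theorem~\ref{theorem5.1}, whereas the paper multiplies by $a^{\#}$ alone to reach $a(a^{\#})^*a^{\dag}=a$ and then by $a^{\dag}$ to get $(a^{\#})^*a^{\dag}=a^{\dag}a$; both routes land on $a^{\dag}=a^*a^{\dag}a$ and finish with \cite[Theorem 1.5.3]{Mosic}.
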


\begin{proof}
$\Rightarrow$ It is clear because $a(a^{\#})^*a^{\dag}=a^{\dag}a^2$.

$\Leftarrow$ From the hypothesis, we have
$$a^{\dag}a^3(a^{\#})^*a^{\dag}=a^{\dag}a^2a^{\dag}a^2=a^{\dag}a^3.$$
Multiplying the equality on the left by $a^{\#}$, one obtain
$a(a^{\#})^*a^{\dag}=a$ and
$$a^{\dag}a=a^{\dag}a(a^{\#})^*a^{\dag}=(a^{\#})^*a^{\dag}.$$
Hence $a\in R^{SEP}$.
\end{proof}

\begin{theorem}
\label{theorem5.4}
Let $a\in R^{\#}\cap R^{\dag}$. Then $a\in R^{SEP}$ if and only if $a(a^{\#})^*a^{\dag}$ and $a^{\dag}a^2$ are left $a^{\dag}aa^{\#}-$equivalent .
\end{theorem}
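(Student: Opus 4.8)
The plan is to observe that, for $a\in R^{\#}\cap R^{\dag}$, the hypothesis of Theorem~\ref{theorem5.4} is only a repackaging of the equation $(a^{\#})^*a^{\dag}=a^{\dag}a$, an equation that already surfaces inside the proof of Theorem~\ref{theorem3.5} and is shown there to force $a\in R^{SEP}$.

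The implication $\Rightarrow$ will require no work: if $a\in R^{SEP}$, then $a(a^{\#})^*a^{\dag}=a^{\dag}a^2$ by \cite[Theorem~2.3]{ZhangWangChenWei}, so $a(a^{\#})^*a^{\dag}$ and $a^{\dag}a^2$ are in fact equal, hence left $x$-equivalent for \emph{every} $x\in R$; in particular they are left $a^{\dag}aa^{\#}$-equivalent.

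For $\Leftarrow$, I would start from the hypothesis
\[
a^{\dag}aa^{\#}\cdot a(a^{\#})^*a^{\dag}=a^{\dag}aa^{\#}\cdot a^{\dag}a^2
\]
and collapse both products using only the elementary identities $aa^{\#}a=a$, $a^{\#}=a^{\#}a^{\dag}a$ and $(a^{\#})^*=a^{\dag}a(a^{\#})^*$ (this last one is exactly what is invoked in the proof of Theorem~\ref{theorem2.1}). On the left, the initial block $a^{\dag}aa^{\#}a=a^{\dag}(aa^{\#}a)=a^{\dag}a$ reduces the left-hand side to $a^{\dag}a(a^{\#})^*a^{\dag}$, and then $a^{\dag}a(a^{\#})^*=(a^{\#})^*$ leaves $(a^{\#})^*a^{\dag}$. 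On the right, the block $a^{\#}a^{\dag}a$ equals $a^{\#}$, so $a^{\dag}aa^{\#}\cdot a^{\dag}a^2=a^{\dag}aa^{\#}a=a^{\dag}(aa^{\#}a)=a^{\dag}a$. Hence the hypothesis is equivalent to $(a^{\#})^*a^{\dag}=a^{\dag}a$. From here I would copy the last lines of the proof of Theorem~\ref{theorem3.5}: multiplying $(a^{\#})^*a^{\dag}=a^{\dag}a$ on the left by $a^*$ gives $a^*(a^{\#})^*a^{\dag}=a^*a^{\dag}a$, and since $a^*(a^{\#})^*a^{\dag}=(aa^{\#})^*a^{\dag}=a^{\dag}$ — the last equality holding in general because $a^2a^{\#}=a$ yields $(aa^{\#})^*a^*=a^*$ while $a^{\dag}=a^*(a^{\dag})^*a^{\dag}$ — one obtains $a^{\dag}=a^*a^{\dag}a$, whence $a\in R^{SEP}$ by \cite[Theorem~1.5.3]{Mosic}.

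I do not anticipate any genuine obstacle: the only point demanding care is the bookkeeping in collapsing the two products, and once the hypothesis has been identified with $(a^{\#})^*a^{\dag}=a^{\dag}a$ the remainder is a verbatim repetition of an argument already carried out. In effect, Theorem~\ref{theorem5.4} is just Theorem~\ref{theorem3.5} restated through the notion of left $a^{\dag}aa^{\#}$-equivalence.
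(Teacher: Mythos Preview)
Your proposal is correct and follows essentially the same route as the paper: both arguments collapse the hypothesis $a^{\dag}aa^{\#}\cdot a(a^{\#})^*a^{\dag}=a^{\dag}aa^{\#}\cdot a^{\dag}a^2$ to the identity $(a^{\#})^*a^{\dag}=a^{\dag}a$ and then finish via $a^{\dag}=a^*a^{\dag}a$ and \cite[Theorem~1.5.3]{Mosic}. The only cosmetic difference is that the paper first multiplies by $a$ on the left to obtain $a=a(a^{\#})^*a^{\dag}$ and then invokes the proof of Theorem~\ref{theorem5.3}, whereas you simplify the left-hand side directly using $a^{\dag}a(a^{\#})^*=(a^{\#})^*$; the endpoint and the cited conclusion are identical.
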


\begin{proof}
$\Rightarrow$ It is clear by \cite [Theorem 2.3] {ZhangWangChenWei}.

$\Leftarrow$ From the assumption, we have
$$a^{\dag}aa^{\#}a(a^{\#})^*a^{\dag}=a^{\dag}aa^{\#}a^{\dag}a^2=a^{\dag}a,$$
$$a=aa^{\dag}a=a(a^{\dag}aa^{\#}a(a^{\#})^*a^{\dag})=a(a^{\#})^*a^{\dag}.$$
Hence $a\in R^{SEP}$ by the proof of Theorem \ref{theorem5.3}.
\end{proof}

\subsection*{Acknowledgements}
This work is in part supported by the Natural Science Foundation of Henan Province of China under Grant No. 222300420499.

\end{document}